\crefname{equation}{}{}
\crefname{lemma}{Lemma}{Lemmas}
\crefname{theorem}{Theorem}{Theorems}
\crefname{discr}{Discretization}{Discretizations}
\numberwithin{equation}{section}
\apptocmd{\sloppy}{\hbadness 10000\relax}{}{}
\newcommand{\ssnm}[1]
{
  \left\vert\kern-0.25ex
  \left\vert\kern-0.25ex
  \left\vert
  {#1}
  \right\vert\kern-0.25ex
  \right\vert\kern-0.25ex
  \right\vert
}
\def\spher@harm#1{%
  \vbox{\hbox{%
    \offinterlineskip
    \valign{&\hb@xt@2\p@{\hss$##$\hss}\vskip.2ex\cr#1\crcr}%
  }\vskip-.36ex}%
}
\def\gshone{\spher@harm{.}}
\def\gshtwo{\spher@harm{.&.}}
\def\gshthree{\spher@harm{.&.&.}}
\let\gsh\spher@harm
\newtheorem{lemma}{Lemma}[section]
\newtheorem{remark}{Remark}[section]
\newtheorem{theorem}{Theorem}[section]
\newtheorem{example}{Example}[section]
\def\@captype{table}\makeatother
\begin{document}
	
\title{
  \Large\bf Semi-discrete and fully discrete mixed finite element methods for  Maxwell viscoelastic model of wave propagation
\thanks
  {
    This work was supported in part by National Natural Science Foundation
    of China (11771312).
  }
}
\author{
  Hao Yuan \thanks{Email:  kobeyuanhao@qq.com},
  Xiaoping Xie \thanks{Corresponding author. Email: xpxie@scu.edu.cn} \\
  {School of Mathematics, Sichuan University, Chengdu 610064, China}
}

\date{}
\maketitle

\begin{abstract}
Semi-discrete and fully discrete mixed finite element methods are considered for  Maxwell-model-based problems of wave propagation in linear viscoelastic solid. This mixed finite element framework allows the use of a large class of existing mixed  conforming finite elements for elasticity   in the spatial discretization. In the fully discrete scheme, a Crank-Nicolson scheme is adopted for the approximation of the temporal derivatives of  stress and velocity variables. Error estimates of the semi-discrete and fully discrete schemes, as well as an unconditional stability result for the fully discrete scheme, are derived. Numerical experiments are provided to verify the theoretical results.
\end{abstract}

\medskip\noindent{\bf Keywords:} Maxwell viscoelastic model;   mixed finite element;   semi-discrete and fully discrete; error estimate; stability

\section{Introduction}
Let $\Omega\subset \mathbb{R}^d$($d$=2 or 3) be a bounded open domain with boundary $\partial\Omega$ and $T$ be a positive constant. Consider the following Maxwell viscoelastic model of wave propagation:
\begin{equation}\label{1.1}
	\left\{
	\begin{array}{ll}
		\rho \boldsymbol{u}_{tt}=\mathrm{\textbf{div}}\sigma+\boldsymbol
		{f}, & (\boldsymbol{x},t)\in\Omega\times[0,T], \\
		\sigma+\sigma_t=\mathbb{C}\varepsilon(\boldsymbol{u}_t), &(\boldsymbol{x},t)\in\Omega\times[0,T],\\
		\boldsymbol{u}=0, &  (\boldsymbol{x},t)\in\partial\Omega\times[0,T], \\
		\boldsymbol{u}(\boldsymbol{x},0)=\phi_0(\boldsymbol{x}),\boldsymbol{u}_t(\boldsymbol{x},0)=\phi_1(\boldsymbol{x}),\sigma(\boldsymbol{x},0)=\psi_0(\boldsymbol{x}),& \boldsymbol{x}\in\Omega.
	\end{array}
	\right.
\end{equation}
Here  $\boldsymbol{u}=(u_1,...,u_d)^{\mathrm{T}}$ is the displacement field, $\sigma=(\sigma_{ij})_{d\times d}$ the symmetric stress tensor,  $\varepsilon(\boldsymbol{u})=(\nabla\boldsymbol{u}+(\nabla\boldsymbol{u})^{\mathrm{T}})/2$ the strain tensor, and  $g_t:=\partial g/\partial t $ and $g_{tt}:=\partial^2 g/\partial t^2 $ for any function $g(\boldsymbol{x},t)$. $\rho(\boldsymbol{x})$ denotes the mass density,  and  $\mathbb{C}$ a  rank 4 symmetric tensor, with
\begin{align}
	&0<\rho_0\leq\rho\leq\rho_1<\infty  \quad a.e.\ x\in\Omega, \label{2.4}\\
	&0<M_0\tau:\tau \leq \mathbb{C}^{-1}\tau: \tau \leq M_1 \tau: \tau \quad \forall \text{ symmetric   tensor }\tau =(\tau_{ij})_{d\times d},    a.e.\ \boldsymbol{x}\in\Omega, \label{2.5}
\end{align}
where $ \rho_0$,$ \rho_1$, $M_0 $ and $M_1$ are four positive constants, and $\sigma:\tau : = \sum\limits_{i=1}^d\sum\limits_{j=1}^d \sigma_{ij}\tau_{ij}$.
Note that  $\mathbb{C}\varepsilon(\boldsymbol{u}_t)$ is of the  form 
\begin{align}
	\mathbb{C}\varepsilon(\boldsymbol{u}_t)=2\mu \varepsilon(\boldsymbol{u}_t)+\lambda \text{div} \boldsymbol{u}_t  I
\end{align}
for an isotropic elastic medium,  where $\mu, \ \lambda$ are  the Lam\'e parameters, and $I$ the identity matrix.  $\boldsymbol{f}=(f_1,...,f_d)^{\mathrm{T}}$ is the body force, and  $\phi_0(\boldsymbol{x}),\ \phi_1(\boldsymbol{x}),\ \psi_0(\boldsymbol{x})$ are initial data. 

Numerous materials simultaneously  display  elastic and viscous
kinematic   behaviours. Such a feature, called viscoelasticity, can be characterized by using
  springs, which obey the Hooke's law, and viscous dashpots, which obey the Newton's law.  Different combinations of   the springs and dashpots lead to various viscoelastic models, e.g. the three classical models of Zener , Voigt and Maxwell. 
We note that there is a unified framework to describe the general constitutive law of viscoelasticity by using convolution integrals in time with some kernels \cite{Christensen1982Theory,Fung1966International,Salencon2016Visco}; however,   the integral forms of constitutive laws, compared with  the differential forms,     bring   more difficulties to the design of algorithms  due to the numerical convolution integral.  We refer the reader to  \cite{1960Bland, 
2007Dill,1998Drozdov,  Fung1966International,  1988Boundary, 1962Gurtin, 2000Nonlinear, Marques2012Computational} for several monographs on the 
development  and applications  of   viscoelasticity theory.

The numerical simulation of wave propagation in viscoelastic materials was first discussed  by  Kosloff et al.
 in \cite{Carcione0Wave, Carcione01988}, 
 where  memory variables were introduced to avoid the convolutional integral in the constitutive relation. Later,  
  finite difference methods were developed  in \cite{2002Parallel,Sabinin2003Numerical,Tong1998Efficient}  for the model with memory variables.  There are 
  considerable research efforts on the finite element discretization in this field.  In  \cite{1995Numerical}  Janovsky et al. studied the continuous/discontinuous Galerkin finite element discretization and used a numerical quadrature formula  to approximate the Volterra time integral term. 
Ha et al. \cite{2002Nonconforming} proposed a nonconforming finite element method for a viscoelastic complex model in the space–frequency domain. B\'{e}cache el at. \cite{Eliane2005A} applied a  family of mixed finite elements  with mass lumping, together with   a leap-frog scheme in time discretization, to numerically solve  the Zener model,  and showed that their scheme is stable under certain CFL condition. In \cite{B2006Discontinuous,B2003Discontinuous,B2007Discontinuous}, Rivi\`{e}re et al. analyzed  discontinuous Galerkin methods  with a Crank-Nicolson temporal discrete scheme  for  quasistatic linear viscoelasticity and linear/nonlinear diffusion viscoelastic models.  Rognes and Winther \cite{2010MIXED} proposed  mixed finite element methods   for quasistatic Maxwell   and Voigt models using weak symmetry, and   used a second backward difference scheme in the full discretization. Lee \cite{Lee2012Mixed}  studied   mixed finite element methods with weak symmetry for 
the Zener ,Voigt and Maxwell models and  adopted the Crank-Nicolson scheme in temporal discretization. Severino and Guillermo \cite{Marques2012Computational} gave an overview of numerical methods for problems in viscoelasticity including finite elements, boundary elements, and finite volume formulations.   Kimura et al. \cite{2018The} studied the gradient flow structure of an extended Maxwell model with a relaxation parameter and proposed a structure-preserving P1/P0 finite element scheme.  Recently, Wang and Xie  \cite{2020Wang} analyzed a  hybrid stress finite element method for the Maxwell model and used a second order implicit difference in the fully discrete scheme.

In this paper, we consider semi-discrete and fully discrete mixed finite element discretizations for the Maxwell viscoelasticity model \cref{1.1}.   Some existing  mixed  conforming finite elements for elasticity  are applied  in the spatial discretization   to approximate the stress and velocity variables.    In the full  discretization,  the Crank-Nicolson scheme is adopted  to discretize  the temporal derivatives of   stress and velocity.  We  derive optimal error estimates   for both the semi-discrete and fully discrete schemes, and give an unconditional stability result for the fully discrete scheme.

The rest of this paper is arranged as follows. Section 2 introduces notations and weak formulations. Section 3 gives a general  mixed conforming finite element framework and carries out the error estimation of the semi-discrete scheme. Section 4 presents the fully discrete scheme and derives    stability and error estimates. Finally, numerical examples are provided in Section 5  to verify the theory results when using two low order 
rectangular elements in the spatial discretization. 

\section{Weak formulations}

We first introduce some notations.   For any nonnegative integer $r$, denote by $H^r(\Omega)$ and  $H^r_0(\Omega)$ the standard Sobolev spaces with norm $||\cdot||_r$ and semi-norm $|\cdot|_r$. In particular,  $H^0(\Omega)=L^2(\Omega)$ is the space of square integrable functions.  We adopt the convention that    an underline (or double underlines) corresponds to a vector-valued ( or tensor-valued) space.

For  any vector-valued ( or tensor-valued)  space $X$, defined on $\Omega$,  with norm $||\cdot||_X$,   denote
\begin{align*}
	L^p([0,T];X):=\left\{\boldsymbol{w}:[0,T]\rightarrow X;\ ||\boldsymbol{w}||_{L^p(X)}<\infty\right\},
\end{align*}
where
\begin{align*}
	||\boldsymbol{w}||_{L^p( X)}:=\left\{ \begin{array}{ll}
	(\int_{0}^{T}||\boldsymbol{w}(t)||_X^p)^{1/p} & \text{ if }1\leq p< \infty,\\
	\mathop{\mathrm{esssup}}_{0\leq t\leq T}||\boldsymbol{w}(t)||_X  & \text{ if } p=\infty,
	\end{array}
	\right.
\end{align*}	
and  $\boldsymbol{w}(t)$ abbreviates   $\boldsymbol{w}(\boldsymbol{x},t)$. 
For   integer $r\geq0$, the space $C^r([0,T];X)$ can be defined similarly.  In the forthcoming analysis, $X$ may be taken as $\underline{L}^2(\Omega), $ $\underline{H}^r(\Omega)$ and  
\begin{align*}
	&\uuline{\mathrm{H}}(\mathrm{\textbf{div}},\Omega,S):=\{\tau=(\tau_{ij})_{d\times d}\in  \uuline{L}^2(\Omega)| \ \tau_{ij}=\tau_{ji}, \ \textbf{div}\tau \in \underline{L}^2(\Omega)\}.
\end{align*}

For convenience, we use the notation $a\lesssim b$ to denote that there exists a generic positive constant $C$, independent of the spatial and temporal mesh parameters, $h$ and $ \Delta t$, 
such that $a\leq Cb.$

We also need  two Gronwall's inequalities \cite{Vidar1986Galerkin}:

\textbf{Continuous Gronwall's inequality.} Let $\phi(\cdot)$ be such that
\begin{align*}
	\phi_t(t)\leq\rho\phi(t)+\eta(t) \quad \mathrm{for} \ 0\leq t\leq T,
\end{align*}
where $\rho\geq0$ is a constant and $\eta(\cdot)\geq0,\eta\in L^1([0,T])$. Then it holds 
\begin{equation}
	\phi(t)\leq e^{\rho T}(\phi(0)+\int_{0}^{T}\eta(s)\mathrm{d}s), \quad \forall t\in[0,T].
\end{equation}

\textbf{Discrete Gronwall's inequality.} Let  $g_0\geq0$ and two nonnegative sequences $(k_n)_{n\geq0}$, $(p_n)_{n\geq0}$  be given. Suppose that the sequence $(\phi_n)_{n\geq0}$ satisfies
\begin{equation}
	\left\{
	\begin{array}{ll}
		\phi_0\leq g_0, \\ 
		\phi_n\leq g_0+\sum_{s=0}^{n-1}p_s+\sum_{s=0}^{n-1}k_s\phi_s,\quad \forall n\geq1.
	\end{array}
	\right.
\end{equation}
Then it holds 
\begin{equation}
	\phi_n\leq(g_0+\sum_{s=0}^{n-1}p_s)\exp(\sum_{s=0}^{n-1}k_s),\quad \forall n\geq1.
\end{equation}


We are now in a position to give the weak form of the  Maxwell model \cref{1.1}.  By introducing the velocity variable   $\boldsymbol{v}=\boldsymbol{u}_t$, the model changes into a velocity-stress form:
\begin{equation}\label{1.1-new}
	\left\{
	\begin{array}{ll}
		\rho \boldsymbol{v}_{t}=\mathrm{\textbf{div}}\sigma+\boldsymbol{f}(x,t), & (\boldsymbol{x},t)\in\Omega\times[0,T], \\
		\sigma+\sigma_t=\mathbb{C}\varepsilon(\boldsymbol{v}), &(\boldsymbol{x},t)\in\Omega\times[0,T],\\
		\boldsymbol{v}=0, &  (\boldsymbol{x},t)\in\partial\Omega\times[0,T], \\
		\boldsymbol{v}(\boldsymbol{x},0)=\phi_1(\boldsymbol{x}),\sigma(\boldsymbol{x},0)=\psi_0(\boldsymbol{x}),& \boldsymbol{x}\in\Omega.
	\end{array}
	\right.
\end{equation}
Based on the Hellinger-Reissner variational principle,   the weak problem for  \cref{1.1} reads as: Find $( \sigma,\boldsymbol{v})\in C^1([0,T],\uuline{\mathrm{H}}(\mathrm{\textbf{div}},\Omega,S))\times C^1([0,T],\uline{L}^2(\Omega))$ such that
\begin{equation}\label{weak problem}
	\left\{
	\begin{array}{lll}
		a(\sigma_t,\tau)+a(\sigma,\tau)+b(\boldsymbol{v},\tau)=0, & \forall \tau\in\uuline{\mathrm{H}}(\mathrm{\textbf{div}},\Omega,S), \\ 
		c(\boldsymbol{v}_t,\boldsymbol{w})-b(\boldsymbol{w},\sigma)=(\boldsymbol{f},\boldsymbol{w}), &  \forall \boldsymbol{w}\in\uline{L}^2(\Omega), \\
		\boldsymbol{v}(\boldsymbol{x},0)=\phi_1, \sigma(\boldsymbol{x},0)=\psi_0.
	\end{array}
	\right.
\end{equation}
Here
\begin{equation}\nonumber
		a(\sigma,\tau):=\int_{\Omega}\mathbb{C}^{-1}\sigma:\tau\mathrm{d}\boldsymbol{x}, \quad  
		b(\boldsymbol{v},\tau):=\int_{\Omega}\boldsymbol{v}\cdot \mathrm{\textbf{div}}\tau\mathrm{d}\boldsymbol{x}, \quad 
		c(\boldsymbol{v},\boldsymbol{w}):=\int_{\Omega}\rho \boldsymbol{v}\cdot \boldsymbol{w}\mathrm{d}\boldsymbol{x}, 
\end{equation}
where $\tilde\tau:\tau=\sum\limits_{i,j=1}^d\tilde\tau_{ij}\tau_{ij}$ for $ \tilde\tau, \tau\in \uuline{\mathrm{H}}(\mathrm{\textbf{div}},\Omega,S)$.


For  any $  \tau\in\uuline{\mathrm{H}}(\mathrm{\textbf{div}},\Omega,S),\ \boldsymbol{w}\in\uline{L}^2(\Omega)$, define
\begin{align*}
	||\tau||_a^2:=a(\tau,\tau),\quad ||\boldsymbol{w}||_c^2:=c(\boldsymbol{w},\boldsymbol{w}). 
\end{align*}
Then, according to \cref{2.4} and \cref{2.5},  it holds
\begin{align}
	\sqrt{M_{0}}||\tau||_{0}\leq&||\tau||_a\leq\sqrt{M_{1}}||\tau||_{0}, \quad
	\sqrt{\rho_0}||\boldsymbol{w}||_{0}\leq ||\boldsymbol{w}||_c\leq\sqrt{\rho_1}||\boldsymbol{w}||_{0},
\end{align}
which also give 
\begin{align}\label{a-norm-equi}
	M_0||\tau||_0^2\leq a(\tau,\tau),\quad  \rho_0||\boldsymbol{w}||_0^2\leq c(\boldsymbol{w},\boldsymbol{w}). 
\end{align}
Simultaneously, the following stability conditions hold (\cite{1991Mixed}):

\noindent(i)   Coercivity of   $a(\cdot,\cdot)$ on $Z:=\{ \tau\in\uuline{\mathrm{H}}(\mathrm{\textbf{div}},\Omega,S);\ b(\boldsymbol{v},\tau)=0,\ \forall \boldsymbol{v}\in\underline{L}^2(\Omega) \}$:
\begin{align*}
	 ||\tau||_{\mathrm{\textbf{div}}}^2\lesssim a(\tau,\tau) \quad \forall\tau\in Z,
\end{align*}
where $ ||\tau||_{\mathrm{\textbf{div}}}^2:=||\tau||_{0}^2+||\mathrm{\textbf{div}}\tau||_{0}^2.$

\noindent(ii)   Inf-sup condition for $b(\cdot,\cdot)$:    
\begin{align*}
	||\boldsymbol{w}||_0\lesssim \sup\limits_{0\neq\tau\in\uuline{\mathrm{H}}(\mathrm{\textbf{div}},\Omega,S)} \frac{b(\boldsymbol{w},\tau)}{ ||\tau||_{\mathrm{\textbf{div}}}} \quad \forall \boldsymbol{w}\in\underline{L}^2(\Omega).
\end{align*}

From \cite[Theorem 5.1]{Lee2012Mixed},   the following result of existence and uniqueness holds.
\begin{lemma}
	Suppose $\phi_0\in\underline{H}_0^1(\Omega),\ \phi_1\in\underline{L}^2(\Omega),\ \sigma_0\in\uuline{\mathrm{H}}(\mathrm{\bf{div}},\Omega, S)$ and $\boldsymbol{f}\in C^0([0,T],\underline{L}^2(\Omega))$, then the weak problem \cref{weak problem} admits a unique solution $( \sigma,\boldsymbol{v})\in C^1([0,T],\uuline{\mathrm{H}}(\mathrm{\bf{div}},\Omega,S))\times C^1([0,T],\uline{L}^2(\Omega))$.
\end{lemma}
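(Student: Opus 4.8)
The plan is to read \cref{weak problem} as a first-order linear evolution system and treat it by semigroup theory, using the Brezzi-type stability conditions (i)--(ii) to supply the static solvability. Set $\mathcal{H}:=\{\tau\in\uuline{L}^2(\Omega):\tau_{ij}=\tau_{ji}\}\times\underline{L}^2(\Omega)$, endowed with the inner product $\dual{(\sigma,\boldsymbol{v}),(\tau,\boldsymbol{w})}_{\mathcal{H}}:=a(\sigma,\tau)+c(\boldsymbol{v},\boldsymbol{w})$, which by \cref{a-norm-equi} is equivalent to the standard one. Eliminating the test functions identifies the operator $\mathcal{A}(\sigma,\boldsymbol{v}):=(-\sigma+\mathbb{C}\varepsilon(\boldsymbol{v}),\,\rho^{-1}\mathrm{\textbf{div}}\sigma)$ on the domain $D(\mathcal{A})=\{(\sigma,\boldsymbol{v})\in\mathcal{H}:\sigma\in\uuline{\mathrm{H}}(\mathrm{\textbf{div}},\Omega,S),\ \boldsymbol{v}\in\underline{H}^1_0(\Omega)\}$, so that \cref{weak problem} becomes the abstract Cauchy problem $U_t=\mathcal{A}U+\mathcal{F}$, $U(0)=(\psi_0,\phi_1)$, with $\mathcal{F}=(0,\rho^{-1}\boldsymbol{f})$.

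The central computation, underlying both stability and uniqueness, is the energy identity: testing the two equations of \cref{weak problem} with $\tau=\sigma$ and $\boldsymbol{w}=\boldsymbol{v}$ and adding, the coupling terms $\pm b(\boldsymbol{v},\sigma)$ cancel, leaving $\tfrac12\tfrac{d}{dt}\big(\nm{\sigma}_a^2+\nm{\boldsymbol{v}}_c^2\big)+\nm{\sigma}_a^2=(\boldsymbol{f},\boldsymbol{v})$. At the abstract level this is precisely the dissipativity of $\mathcal{A}$: for $U=(\sigma,\boldsymbol{v})\in D(\mathcal{A})$, integration by parts gives $a(\mathbb{C}\varepsilon(\boldsymbol{v}),\sigma)=-b(\boldsymbol{v},\sigma)$ while $c(\rho^{-1}\mathrm{\textbf{div}}\sigma,\boldsymbol{v})=b(\boldsymbol{v},\sigma)$, so the skew coupling cancels and $\dual{\mathcal{A}U,U}_{\mathcal{H}}=-\nm{\sigma}_a^2\le0$.

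The main obstacle is the maximality (range) condition of the Lumer--Phillips theorem: for some $\lambda>0$ one must solve $(\lambda\mathcal{I}-\mathcal{A})(\sigma,\boldsymbol{v})=(F,G)$ for every $(F,G)\in\mathcal{H}$. Unfolding $\mathcal{A}$, this is the stationary mixed problem $(\lambda+1)\,a(\sigma,\tau)+b(\boldsymbol{v},\tau)=a(F,\tau)$ for all $\tau\in\uuline{\mathrm{H}}(\mathrm{\textbf{div}},\Omega,S)$ and $\lambda\,c(\boldsymbol{v},\boldsymbol{w})-b(\boldsymbol{w},\sigma)=c(G,\boldsymbol{w})$ for all $\boldsymbol{w}\in\underline{L}^2(\Omega)$ --- a saddle-point system for $(\sigma,\boldsymbol{v})$ with a coercive regularization on the multiplier block. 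Here I would invoke exactly the two recorded stability conditions, the kernel-coercivity (i) of $a(\cdot,\cdot)$ and the inf-sup condition (ii) for $b(\cdot,\cdot)$, which are the Brezzi hypotheses guaranteeing well-posedness of this class; the extra terms $a(\sigma,\tau)$ and $\lambda c(\boldsymbol{v},\boldsymbol{w})$ only reinforce coercivity, the $\lambda c$-term fixing $\boldsymbol{v}$ and, through the second equation, determining $\mathrm{\textbf{div}}\sigma\in\underline{L}^2(\Omega)$. This yields a unique solution in $\mathcal{H}$, i.e. $\mathrm{Range}(\lambda\mathcal{I}-\mathcal{A})=\mathcal{H}$.

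With dissipativity and maximality in hand, Lumer--Phillips shows $\mathcal{A}$ generates a contraction $C_0$-semigroup on $\mathcal{H}$; since $\boldsymbol{f}\in C^0([0,T],\underline{L}^2(\Omega))$ gives $\mathcal{F}\in C^0([0,T];\mathcal{H})$ and $(\psi_0,\phi_1)\in\mathcal{H}$, Duhamel's formula produces a unique mild solution. To reach the asserted class $C^1([0,T],\uuline{\mathrm{H}}(\mathrm{\textbf{div}},\Omega,S))\times C^1([0,T],\underline{L}^2(\Omega))$ I would take the data in $D(\mathcal{A})$ and combine the standard semigroup regularity theory with a bootstrap that transfers smoothness from the time derivative back onto $\mathrm{\textbf{div}}\sigma$ through the two equations; this regularity upgrade is the most technical point. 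Uniqueness, finally, is immediate from the energy identity: the difference of two solutions has vanishing data and forcing, so the continuous Gronwall inequality forces $\nm{\sigma}_a^2+\nm{\boldsymbol{v}}_c^2\equiv0$.
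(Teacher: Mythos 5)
You should note first that the paper offers no proof of its own for this lemma: it is quoted verbatim from the reference [Lee, Theorem 5.1]. Your semigroup route is therefore the natural self-contained substitute, and its core is sound: the identification of the generator $\mathcal{A}(\sigma,\boldsymbol{v})=(-\sigma+\mathbb{C}\varepsilon(\boldsymbol{v}),\,\rho^{-1}\mathrm{\textbf{div}}\sigma)$, the dissipativity computation $\dual{\mathcal{A}U,U}_{\mathcal{H}}=-\nm{\sigma}_a^2\le 0$, the cancellation of the skew coupling, and the uniqueness argument via the energy identity are all correct. Two remarks on the resolvent step: invoking Brezzi's conditions (i)--(ii) is imprecise, since the standard Brezzi theorem addresses the unpenalized system, whereas your system carries the term $\lambda c(\boldsymbol{v},\boldsymbol{w})$; the cleanest argument is to eliminate $\boldsymbol{v}$ from the second equation, $\boldsymbol{v}=\lambda^{-1}(G+\rho^{-1}\mathrm{\textbf{div}}\sigma)$, and apply Lax--Milgram to the Schur complement form $(\lambda+1)a(\sigma,\tau)+\lambda^{-1}(\rho^{-1}\mathrm{\textbf{div}}\sigma,\mathrm{\textbf{div}}\tau)$, which is coercive on $\uuline{\mathrm{H}}(\mathrm{\textbf{div}},\Omega,S)$ and needs neither (i) nor (ii). Also, having solved the resolvent system, you must still verify that the solution lies in $D(\mathcal{A})$, i.e.\ that $\boldsymbol{v}\in\underline{H}^1_0(\Omega)$; this follows from the first resolvent equation, which makes $\tau\mapsto b(\boldsymbol{v},\tau)$ bounded by a multiple of $\nm{\tau}_0$, together with Korn's inequality and a boundary-trace argument --- a step you do not address.

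The genuine gap is your final step. Under the hypotheses of the lemma the initial datum $(\psi_0,\phi_1)$ is \emph{not} in $D(\mathcal{A})$, because $\phi_1$ is only assumed in $\underline{L}^2(\Omega)$, not in $\underline{H}^1_0(\Omega)$; and $\boldsymbol{f}\in C^0([0,T],\underline{L}^2(\Omega))$ makes $\mathcal{F}$ merely continuous in time, which for a non-analytic $C_0$-semigroup is not sufficient to turn the Duhamel (mild) solution into a classical one --- one needs $\mathcal{F}\in C^1([0,T];\mathcal{H})$ or $\mathcal{F}\in C^0([0,T];D(\mathcal{A}))$ with $\mathcal{A}\mathcal{F}$ integrable. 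Hence Lumer--Phillips plus Duhamel delivers only a mild solution in $C^0([0,T];\mathcal{H})$: neither the claimed $C^1$ regularity in time nor the continuity of $\sigma$ with values in $\uuline{\mathrm{H}}(\mathrm{\textbf{div}},\Omega,S)$ follows, and your proposed remedy of ``taking the data in $D(\mathcal{A})$'' silently strengthens the hypotheses of the lemma. The defect is not merely technical: if $(\sigma,\boldsymbol{v})$ solves \cref{weak problem} in the stated class, then the first equation at $t=0$ shows $b(\phi_1,\tau)=-a(\sigma_t(0)+\psi_0,\tau)$ is bounded by a constant times $\nm{\tau}_0$, which (again by Korn and a trace argument) forces $\phi_1\in\underline{H}^1_0(\Omega)$. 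So no argument whatsoever can produce a solution in the stated class from $\phi_1\in\underline{L}^2(\Omega)$ alone; closing the gap requires either strengthening the hypotheses ($\phi_1\in\underline{H}^1_0(\Omega)$ and $\boldsymbol{f}$ with one of the above time regularities) or weakening the solution concept to mild solutions --- a discrepancy already present in the paper's loose transcription of the cited theorem, but one your proof, as written, inherits rather than resolves.
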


\section{Semi-discrete   mixed finite element method}
In this section, we discuss the semi-discrete finite element discretization of  \cref{weak problem}   and analyze its convergence under a general conforming mixed FEM framework. 

\subsection{Semi-discrete scheme}
Assume  that $\Omega$ is a convex polyhedral domain, and  let $\mathcal{T}_h=\bigcup\{K\}$ be a shape regular partition of  $\Omega$ consisting of triangles/tetrahedrons or rectangles/cuboids.  For any $K\in \mathcal{T}_h$, let $h_K$ denote its diameter, and we set $h:=\mathop{\mathrm{max}}_{K\in\mathcal{T}_h} h_K$. For any integer $k\geq0$,   let $P_k(K)$ denote the set of all polynomials  on $K$  of   degree at most $k$,   and let $Q_k(K)$ denote the set of all polynomials on $K$ of degree at most $k$ in each variable.


Let $\uuline{\mathrm{H}_h}\subset\uuline{\mathrm{H}}(\mathrm{\textbf{div}},\Omega) $ and $ \underline{\mathrm{V}_h}\subset\uline{L}^2(\Omega)$ be two finite-dimensional spaces    respectively for stress and velocity approximations on  $\mathcal{T}_h$,  satisfying
%
the following   condition:

(A1)   Discrete   inf-sup  condition:
\begin{align*}
	||\boldsymbol{w}_h||_0\lesssim \sup\limits_{0\neq\tau_h\in\uuline{\mathrm{H}_h}} \frac{b(\boldsymbol{w}_h,\tau_h)}{ ||\tau_h||_{\mathrm{\textbf{div}}}} \quad \forall \boldsymbol{w}_h\in\uline{\mathrm{V}_h}.
\end{align*}


 From \cref{a-norm-equi} we easily obtain the following two inequalities: 
\begin{equation}\label{dis-a-norm-equi}
	\begin{array}{ll}
			M_0||\tau_h||_0^2\leq a(\tau_h,\tau_h),\quad &\forall\tau_h\in\uuline{\mathrm{H}_h},\\ 
		\rho_0||\boldsymbol{w}_h||_0^2\leq c(\boldsymbol{w}_h,\boldsymbol{w}_h),\quad &\forall \boldsymbol{w}_h\in\uline{\mathrm{V}_h}.
	\end{array}
\end{equation}

Let $\phi_{1,h},\ \psi_{0,h}$ be respectively approximations of initial data $\phi_1$ and $\psi_0$, then the generic semi-discrete mixed conforming finite element scheme reads as: Find  $( \sigma_h,\boldsymbol{v}_h)\in   C^1([0,T],\uuline{\mathrm{H}_h})\times C^1([0,T],\uline{\mathrm{V}_h})$ such that
\begin{equation}\label{semi-disc}
	\left\{
	\begin{array}{lll}
		a(\sigma_{h,t},\tau_h)+a(\sigma_h,\tau_h)+b(\boldsymbol{v}_h,\tau_h)=0, & \forall \tau_h\in\uuline{\mathrm{H}_h}, \\ 
		c(\boldsymbol{v}_{h,t},\boldsymbol{w}_h)-b(\boldsymbol{w}_h,\sigma_h)=(\boldsymbol{f},\boldsymbol{w}_h), &  \forall \boldsymbol{w}_h\in\uline{\mathrm{V}_h}, \\
		\boldsymbol{v}_h(\boldsymbol{x},0)=\phi_{1,h}, \sigma_h(\boldsymbol{x},0)=\psi_{0,h}.
	\end{array}
	\right.
\end{equation}
By using standard techniques, we can obtain the following conclusion.

\begin{lemma}
	The semi-discrete scheme \cref{semi-disc} admits a unique solution  $( \sigma_h,\boldsymbol{v}_h).$
\end{lemma}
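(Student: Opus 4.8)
The plan is to reduce the semi-discrete scheme \cref{semi-disc} to an initial value problem for a linear system of ordinary differential equations in the finite-dimensional coefficient vectors, and then appeal to the classical Cauchy--Lipschitz theory. First I would fix bases $\{\tau_i\}_{i=1}^{N_1}$ of $\uuline{\mathrm{H}_h}$ and $\{\boldsymbol{w}_j\}_{j=1}^{N_2}$ of $\uline{\mathrm{V}_h}$, and expand the unknowns as $\sigma_h(t)=\sum_{i=1}^{N_1}\alpha_i(t)\tau_i$ and $\boldsymbol{v}_h(t)=\sum_{j=1}^{N_2}\beta_j(t)\boldsymbol{w}_j$, collecting the coefficients into vectors $\boldsymbol{\alpha}(t)=(\alpha_i(t))$ and $\boldsymbol{\beta}(t)=(\beta_j(t))$. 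Letting the test functions $\tau_h$ and $\boldsymbol{w}_h$ range over these two bases, and introducing the matrices $A=(a(\tau_k,\tau_i))_{i,k}$, $C=(c(\boldsymbol{w}_l,\boldsymbol{w}_j))_{j,l}$, $B=(b(\boldsymbol{w}_j,\tau_i))_{i,j}$ together with the load vector $\boldsymbol{F}(t)=((\boldsymbol{f}(t),\boldsymbol{w}_j))_j$, the scheme \cref{semi-disc} becomes equivalent to
\begin{align*}
	A\dot{\boldsymbol{\alpha}}+A\boldsymbol{\alpha}+B\boldsymbol{\beta}=0, \qquad
	C\dot{\boldsymbol{\beta}}-B^{\mathrm{T}}\boldsymbol{\alpha}=\boldsymbol{F}(t),
\end{align*}
subject to the initial data $\boldsymbol{\alpha}(0),\boldsymbol{\beta}(0)$ determined by the expansions of $\psi_{0,h}$ and $\phi_{1,h}$.

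The key observation is that both mass matrices $A$ and $C$ are invertible. Indeed, they are symmetric by the symmetry of $a(\cdot,\cdot)$ and $c(\cdot,\cdot)$, and the coercivity estimates \cref{dis-a-norm-equi} give $\boldsymbol{\xi}^{\mathrm{T}}A\boldsymbol{\xi}=a(\chi,\chi)\geq M_0||\chi||_0^2>0$ for any nonzero $\chi=\sum_i\xi_i\tau_i\in\uuline{\mathrm{H}_h}$, and likewise $\boldsymbol{\eta}^{\mathrm{T}}C\boldsymbol{\eta}=c(\boldsymbol{\zeta},\boldsymbol{\zeta})\geq\rho_0||\boldsymbol{\zeta}||_0^2>0$ for any nonzero $\boldsymbol{\zeta}=\sum_j\eta_j\boldsymbol{w}_j\in\uline{\mathrm{V}_h}$; hence $A$ and $C$ are symmetric positive definite, and in particular nonsingular. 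Multiplying the two equations by $A^{-1}$ and $C^{-1}$ respectively puts the system into the explicit first-order form $\dot{\boldsymbol{y}}=\mathcal{M}\boldsymbol{y}+\boldsymbol{g}(t)$, where $\boldsymbol{y}=(\boldsymbol{\alpha},\boldsymbol{\beta})^{\mathrm{T}}$, $\mathcal{M}$ is a constant matrix, and $\boldsymbol{g}(t)$ depends continuously on $t$ since $\boldsymbol{f}\in C^0([0,T],\uline{L}^2(\Omega))$.

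Finally, I would invoke the standard existence and uniqueness theorem for linear systems of ordinary differential equations with continuous inhomogeneity: such a problem admits a unique solution $\boldsymbol{y}$ on the whole interval $[0,T]$, which is moreover $C^1$ in time because $\boldsymbol{g}$ is continuous. Translating back through the basis expansions yields a unique $(\sigma_h,\boldsymbol{v}_h)\in C^1([0,T],\uuline{\mathrm{H}_h})\times C^1([0,T],\uline{\mathrm{V}_h})$ solving \cref{semi-disc}. The only point genuinely requiring attention is the invertibility of the mass matrices $A$ and $C$, which is precisely where the coercivity conditions \cref{dis-a-norm-equi} enter; once this is in hand the remainder is a direct application of linear ODE theory. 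I would also remark that the discrete inf-sup condition (A1) is not needed for this well-posedness argument—it is required only for the subsequent error analysis.
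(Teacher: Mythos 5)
Your proposal is correct and follows essentially the same route as the paper's own proof: expand in bases, rewrite \cref{semi-disc} as a linear ODE system with block-diagonal coefficient matrix $\begin{pmatrix} A & 0 \\ 0 & C \end{pmatrix}$, observe that $A$ and $C$ are symmetric positive definite (the paper asserts this directly, while you justify it via \cref{dis-a-norm-equi}), and invoke standard linear ODE theory. Your closing remark that the discrete inf-sup condition (A1) is not needed here is a correct and worthwhile observation, consistent with the paper's argument.
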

\begin{proof}
	Let $\{\varphi_i\}_{i=1}^{r}$, $\{\boldsymbol{\kappa}_i\}_{i=1}^{s}$ be bases of $\uuline{\mathrm{H}_h}$ and $\uline{\mathrm{V}_h}$ respectively. Let $(i,j)$-th components of matrix $A,\ B,\ C$ be
	\begin{align*}
		(\mathbb{C}^{-1}\varphi_j,\varphi_i),\ (\mathrm{\textbf{div}}\varphi_j,\boldsymbol{\kappa}_i),\ (\boldsymbol{\kappa}_j,\boldsymbol{\kappa}_i),
	\end{align*}
	respectively.
We write $\sigma_h=\sum_{i}\alpha_i\varphi_i,\ \boldsymbol{v}_h=\sum_i\beta_i\boldsymbol{\kappa}_i,\ \eta_j=(\boldsymbol{f},\boldsymbol{\kappa}_j)$, and denote by $\alpha,\ \beta,\ \eta$  the corresponding vectors, respectively.
Then we   rewrite \cref{semi-disc} as a matrix equation of the form
\begin{equation}\label{semimatrix}
		\left(
	\begin{array}{cc}
		A & 0 \\
		0 & C \\
	\end{array}
	\right)
	\left(
	\begin{array}{c}
		\alpha_t \\
		\beta _t
	\end{array}
	\right)
	=
	\left(
	\begin{array}{cc}
		-A & -B \\
		0 & B^{\mathrm{T}} \\ 
	\end{array}
	\right)
	\left(
	\begin{array}{c}
		\alpha \\
		\beta 
	\end{array}
	\right)
	+
	\left(
	\begin{array}{c}
		0 \\ 
		\eta
	\end{array}
	\right)
\end{equation}
The coefficient matrix on the left side of the equation is nonsingular because $A,\ C$ are symmetric positive definite. Thus, due to the standard theory of ordinary differential equations,  the system \cref{semimatrix},   and also \cref{semi-disc}, admits a unique solution.
\end{proof}

\subsection{Error estimation}
To carry out the error estimation, we need to 
introduce, for  the   solution $(\sigma(t), \boldsymbol{v}(t))\in\uuline{\mathrm{H}}(\mathrm{\textbf{div}},\Omega,S)\times \uline{L}^2(\Omega)$ to the 
weak problem  \cref{weak problem}, an  ``elliptic projection" $(\Pi_1\sigma,\Pi_2\boldsymbol{v})\in \uuline{\mathrm{H}_h}\times \uline{\mathrm{V}_h} $
which are defined as follows:
for $t\in[0,T]$, let $(\Pi_1\sigma,\Pi_2\boldsymbol{v}):=(\hat{\sigma}_h(t),\hat{\boldsymbol{v}}_h(t))\in\uuline{\mathrm{H}_h}\times \uline{\mathrm{V}_h}$ satisfy 
\begin{equation}\label{projection}
	\left\{
	\begin{array}{ll}
		a(\hat{\sigma}_h(t),\tau_h)+b(\hat{\boldsymbol{v}}_h(t),\tau_h)=-a(\partial_t\sigma,\tau_h), & \forall \tau_h\in\uuline{\mathrm{H}_h}, \\ 
		b(\boldsymbol{w}_h,\hat{\sigma}_h(t))=b(\boldsymbol{w}_h,\sigma)=c(\boldsymbol{v}_t,\boldsymbol{w}_h)-(\boldsymbol{f},\boldsymbol{w}_h), &  \forall \boldsymbol{w}_h\in\uline{\mathrm{V}_h}.
	\end{array}
	\right.
\end{equation} 
By (A1) and \cref{dis-a-norm-equi} it is easy to see that the   ``elliptic projection" is well-defined.

To derive convergence rates we also make the following regularity and approximation assumptions:

(A2) Let $(\sigma,\boldsymbol{v})$, the weak solution to   \cref{weak problem}, and its elliptic projection $(\hat{\sigma}_h(t),\hat{\boldsymbol{v}}_h(t))$   satisfy the regularity conditions 
	\begin{equation}\label{regularity-assump}
	\left\{
	\begin{array}{ll}
	\sigma\in L^{\infty}([0,T],\uuline{H}^{m}(\Omega)),&  \sigma_t\in L^2([0,T],\uuline{H}^{m}(\Omega)),\\ 
	 \boldsymbol{v}\in L^{\infty}([0,T],\underline{H}^{m'}(\Omega)),&  \boldsymbol{v}_t\in L^2([0,T],\underline{H}^{m'}(\Omega)),
		\end{array}
	\right.
\end{equation} 
and the approximation conditions
\begin{equation}\label{projectionerror}
	\left\{
	\begin{array}{ll}
		||\hat{\sigma}_h-\sigma||_0+||\hat{\boldsymbol{v}}_h-\boldsymbol{v}||_0\lesssim h^{l}(||\sigma||_{m}+||\boldsymbol{v}||_{m'}),\\
		||\hat{\sigma}_{h,t}-\sigma_t||_0+  ||\hat{\boldsymbol{v}}_{h,t}-\boldsymbol{v}_t||_0\lesssim h^{l}(||\sigma_t||_{m}+||v_t||_{m'}),
		\end{array}
	\right.
\end{equation}
where $l, m, m'\geq0$ are integers, 
and $g_{h,t}:=\partial g_{h}/\partial t$ with $g=\hat{\boldsymbol{v}},   \hat{\sigma}$.
\begin{remark}\label{rem3.1}
In the following, we introduce for   $d=2$ and $3$  several pairs of $\uuline{\mathrm{H}_h}$ and  $\uline{\mathrm{V}_h}$ which satisfy both \cref{projectionerror} and the discrete inf-sup condition (A1). 
\begin{itemize}
\item   Arnold-Winther's triangular elements ($d=2$) \cite{2002Mixed}:
\begin{align*}
	&\uuline{\mathrm{H}_h}=\left\{\tau\in\uuline{\mathrm{H}}(\mathrm{\bf{div}},\Omega,S); \ \tau_{ij}|_K\in P_{k+2}(K),\ {\bf{div}}(\tau|_K)\in P_k(K)^2 \ \forall   K\in\mathcal{T}_h\right\}, \\
	&\uline{\mathrm{V}_h}=\left\{\boldsymbol{w}\in\uline{L}^2(\Omega); \  \boldsymbol{w}|_K\in P_k(K)\ \forall K\in\mathcal{T}_h\right\},
	\end{align*}
where $k\geq 1$. The estimates in \cref{projectionerror} hold with $  l=m=k+1$ and $m'=k+2$.

\item Arnold-Awanou's rectangular elements ($d=2$) \cite{Arnold2005}:
\begin{align*}
	&\uuline{\mathrm{H}_h}=\left\{\tau\in\uuline{\mathrm{H}}(\mathrm{\bf{div}},\Omega, S);\ \tau_{11}|_K\in P_{k+4,k+2}(K), \tau_{12}|_K\in P_{k+3,k+3}(K), \right. \\
&\left. \qquad\qquad\qquad \qquad \qquad   \tau_{22}|_K\in P_{k+2,k+4}(K) 
\ \forall K\in\mathcal{T}_h\right\}, \\
	&\uline{\mathrm{V}_h}=\left\{\boldsymbol{w}\in\uline{L}^2(\Omega); \ w_1|_K\in P_{k+1,k}(K),w_2|_K\in P_{k,k+1}(K) \ \forall K\in\mathcal{T}_h\right\},
\end{align*}
 where $k\geq 1$,  and $P_{r,s}(K)$ denotes the set of polynomials, defined  on $K$, of degree at most $r$ in $x_1$ and of degree at most $s$ in $x_2$ for $\boldsymbol{x}=(x_1,x_2)\in K$. 
 The estimates in \cref{projectionerror} hold with $  l=m=k+1$ and $m'=k+2$.

\item Hu-Zhang's triangular/tetrahedral elements ($d=2,3$) \cite{Hu2015FINITE,Hu2014FINITE}:
\begin{align*}
	&\uuline{\mathrm{H}_h}=\left\{\tau\in\uuline{\mathrm{H}}(\mathrm{\bf{div}},\Omega,S); \ \tau_{ij}|_K\in P_{k+d}(K) \ \forall K\in\mathcal{T}_h  \right\}, \\
	&\uline{\mathrm{V}_h}=\left\{\boldsymbol{w}\in\uline{L}^2(\Omega);\  \boldsymbol{w}|_K\in P_{k+d-1}(K)\ \forall K\in\mathcal{T}_h\right\},
\end{align*}
where $k\geq 2$. The estimates in \cref{projectionerror} hold with $   l=m'=k+d-1  $ and $  m=k+d$.

\item   Hu-Man-Zhang's rectangular/cuboid element ($d=2,3$) \cite{2014Hu}:
\begin{align*}
	&\uuline{\mathrm{H}_h}=\left\{\tau\in\uuline{\mathrm{H}}(\mathrm{\bf{div}},\Omega,S); \ \tau_{ii}|_K\in span\{1,x_i,x_i^2\},\right.\\
	&\qquad\qquad  \tau_{ij}|_K\in span\{1,x_i,x_j,x_ix_j\},\  i\neq j, \ \forall K\in\mathcal{T}_h\big\}, \\
	&\uline{\mathrm{V}_h}=\left\{\boldsymbol{w}\in\uline{L}^2(\Omega);\  w_i|_K\in span\{1,x_i \}\ \forall K\in\mathcal{T}_h\right\}.
\end{align*}
The estimates in \cref{projectionerror} hold with $   l=m'=1  $ and $  m=2$.

\item Nedelec's rectangular/cuboid elements ($d=2,3$) \cite{Eliane2001A,J1986A}: 
\begin{align*}
	&\uuline{\mathrm{H}_h}=\left\{\tau\in\uuline{\mathrm{H}}(\mathrm{\bf{div}},\Omega,S); \   \tau_{ij}|_K\in Q_{k+1}(K) \ \forall K\in\mathcal{T}_h\right\},\\
	&\uline{\mathrm{V}_h}=\left\{\boldsymbol{w}\in\uline{L}^2(\Omega);\ w_i|_K\in Q_k(K) \ \forall K\in\mathcal{T}_h\right\},
\end{align*}
where $k\geq 0$. The estimates in \cref{projectionerror} hold with $   l= k+1  $ and $  m=m'=k+2$.  We mention that in \cite{Eliane2001A}  the degrees of freedom  of Nedelec's rectangular elements  $Q_{k+1}^{\mathrm{\textbf{div}}}-Q_k$ ($k\geq 0$) (cf. \cite{J1986A}) are modified so as to allow mass lumping.   

\end{itemize}

\end{remark}

In what follows, we choose the initial data in \cref{semi-disc} as
\begin{equation}\label{initial}
	 \psi_{0,h}=I_{\mathrm{H}_h}\psi_0,\quad \phi_{1,h}=I_{\mathrm{V}_h}\phi_1,
\end{equation}
where $I_{\mathrm{H}_h}:\ \uuline{\mathrm{H}}(\mathrm{\textbf{div}},\Omega,S)\rightarrow\uuline{\mathrm{H}_h}$, $I_{\mathrm{V}_h}:\uline{L}^2(\Omega)\rightarrow\uline{\mathrm{V}_h}$ be two interpolation operators   satisfying
 \begin{equation}\label{initial-estimate}
 ||\psi_0-I_{\mathrm{H}_h}\psi_0||_0\lesssim h^l||\psi_0||_l, \quad ||\phi_1-I_{\mathrm{V}_h}\phi_1||_0\lesssim h^l||\phi_1||_l
 \end{equation}
 for 	$
	 \psi_0\in \uuline{H}^{l}(\Omega), \quad \phi_1\in \underline{H}^l(\Omega).
	$

\begin{theorem}\label{semi-theorem}
	Let $( \sigma,\boldsymbol{v})\in C^1([0,T],\uuline{\mathrm{H}}(\mathrm{\bf{div}},\Omega,S))\times C^1([0,T],\uline{L}^2(\Omega))$ be the solution of  the weak problem \cref{weak problem} and $(\sigma_h,\boldsymbol{v}_h)\in   C^1([0,T],\uuline{\mathrm{H}_h})\times C^1([0,T],\uline{\mathrm{V}_h})$ be the solution of the semi-discrete problem \cref{semi-disc}. Then,  under the assumptions (A1), (A2)  and \cref{initial-estimate} we have
	\begin{align}\label{semi-estimate}
		&\quad  ||\sigma-\sigma_h||_{\mathrm{L}^\infty([0,T],L^2)}+||\boldsymbol{v}-\boldsymbol{v}_h||_{\mathrm{L}^\infty([0,T],L^2)} \nonumber\\ 
		&\lesssim h^l(||\psi_0||_l+||\phi_1||_l+||\sigma||_{L^{\infty}(H^m)}+||\sigma_t||_{L^2(H^{m})}+||\boldsymbol{v}||_{L^{\infty}(H^{m'})}+||\boldsymbol{v}_t||_{L^2(H^{m'})}).
	\end{align}

\end{theorem}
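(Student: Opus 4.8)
The plan is to use the standard elliptic-projection technique for error estimates of semi-discrete mixed methods. I split the error into two pieces via the elliptic projection $(\Pi_1\sigma,\Pi_2\boldsymbol{v})=(\hat\sigma_h,\hat{\boldsymbol{v}}_h)$ defined in \cref{projection}:
\[
\sigma-\sigma_h=(\sigma-\hat\sigma_h)+(\hat\sigma_h-\sigma_h)=:\rho_\sigma+\theta_\sigma,\qquad
\boldsymbol{v}-\boldsymbol{v}_h=(\boldsymbol{v}-\hat{\boldsymbol{v}}_h)+(\hat{\boldsymbol{v}}_h-\boldsymbol{v}_h)=:\rho_v+\theta_v.
\]
The projection-error parts $\rho_\sigma,\rho_v$ are controlled directly by the approximation assumption \cref{projectionerror} in (A2). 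So the work is to bound the discrete parts $\theta_\sigma\in\uuline{\mathrm{H}_h}$ and $\theta_v\in\uline{\mathrm{V}_h}$.

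First I would derive the error equations for $\theta_\sigma,\theta_v$. Subtracting the semi-discrete scheme \cref{semi-disc} from the weak problem \cref{weak problem} restricted to the discrete test spaces, and then using the defining relations of the elliptic projection \cref{projection}, the terms involving $\sigma$ and $\boldsymbol{v}$ should telescope so that the right-hand sides only see time derivatives of the projection errors. Concretely, I expect to obtain something of the form
\begin{align*}
a(\theta_{\sigma,t},\tau_h)+a(\theta_\sigma,\tau_h)+b(\theta_v,\tau_h)&=-a(\rho_{\sigma,t},\tau_h),\\
c(\theta_{v,t},\boldsymbol{w}_h)-b(\boldsymbol{w}_h,\theta_\sigma)&=-c(\rho_{v,t},\boldsymbol{w}_h),
\end{align*}
for all $\tau_h\in\uuline{\mathrm{H}_h}$, $\boldsymbol{w}_h\in\uline{\mathrm{V}_h}$, with the $b(\theta_v,\cdot)$ and $b(\cdot,\theta_\sigma)$ structure designed so the cross terms cancel. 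The key feature of the elliptic projection \cref{projection} is precisely that the $b(\cdot,\hat\sigma_h)$ equation is engineered to kill the troublesome $b$-coupling and the displacement projection error, leaving only $a$- and $c$-type time-derivative residuals.

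Next, the energy argument: I would test the first equation with $\tau_h=\theta_\sigma$ and the second with $\boldsymbol{w}_h=\theta_v$ and add. The skew-symmetric coupling terms $b(\theta_v,\theta_\sigma)$ cancel, yielding
\[
\tfrac12\tfrac{d}{dt}\big(\|\theta_\sigma\|_a^2+\|\theta_v\|_c^2\big)+\|\theta_\sigma\|_a^2
=-a(\rho_{\sigma,t},\theta_\sigma)-c(\rho_{v,t},\theta_v).
\]
Dropping the nonnegative $\|\theta_\sigma\|_a^2$ on the left, bounding the right-hand side by Cauchy--Schwarz and Young's inequality (absorbing $\theta_\sigma,\theta_v$ into the energy), and invoking the norm equivalences \cref{dis-a-norm-equi}, I get a differential inequality for $\phi(t):=\|\theta_\sigma\|_a^2+\|\theta_v\|_c^2$ of the form $\phi_t\le C\phi+\eta$ with $\eta\lesssim\|\rho_{\sigma,t}\|_0^2+\|\rho_{v,t}\|_0^2$. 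The continuous Gronwall inequality then bounds $\sup_{[0,T]}\phi$ by $\phi(0)$ plus $\int_0^T\eta$, and I estimate $\int_0^T\eta$ using the \emph{second} line of \cref{projectionerror}, which gives exactly the $\|\sigma_t\|_{L^2(H^m)}$ and $\|\boldsymbol{v}_t\|_{L^2(H^{m'})}$ contributions in $L^2$-in-time. The initial value $\phi(0)$ is handled by the choice of discrete initial data \cref{initial} together with \cref{initial-estimate}: since $\theta_\sigma(0)=\hat\sigma_h(0)-\psi_{0,h}$ and $\theta_v(0)=\hat{\boldsymbol{v}}_h(0)-\phi_{1,h}$, a triangle inequality reduces $\phi(0)$ to the projection and interpolation errors of the data, contributing the $\|\psi_0\|_l+\|\phi_1\|_l$ terms. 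Finally, combining the $\theta$-bound with the $\rho$-bound from the first line of \cref{projectionerror} via the triangle inequality yields \cref{semi-estimate}.

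\textbf{The main obstacle} I anticipate is the careful bookkeeping in deriving the error equations so that the elliptic-projection definition \cref{projection} cleanly removes the $b$-coupling and the $\rho_v$ residual, leaving a right-hand side involving only $\rho_{\sigma,t}$ and $\rho_{v,t}$; a mismatch of a sign or of which variable is differentiated is what would break the subsequent cancellation of cross terms and the clean Gronwall estimate. A secondary subtlety is justifying the time differentiation of the projection (so that $\rho_{\sigma,t}=\sigma_t-\hat\sigma_{h,t}$ and the second estimate in \cref{projectionerror} applies), which relies on the $C^1$-in-time regularity of the solution and the well-posedness of the projection guaranteed by (A1) and \cref{dis-a-norm-equi}.
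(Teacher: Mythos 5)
Your proposal is correct and follows essentially the same route as the paper's proof: the same elliptic projection \cref{projection}, the same splitting into $\rho$- and $\theta$-parts, the same energy identity with cancellation of the $b$-coupling after testing with $(\theta_\sigma,\theta_v)$, Young's inequality plus the continuous Gronwall inequality, and the same treatment of the initial data via \cref{initial-estimate}. The only cosmetic difference is that you move the residuals $a(\rho_{\sigma,t},\cdot)$ and $c(\rho_{v,t},\cdot)$ to the right-hand side of the error equations at the outset, whereas the paper keeps $\partial_t(\sigma-\sigma_h)$ and $\partial_t(\boldsymbol{v}-\boldsymbol{v}_h)$ on the left and splits them only after testing; the resulting estimates are identical.
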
  
 
\begin{proof}
	In light of   \cref{projectionerror}, \cref{projection}  and  the triangle inequality, it suffices to   show the estimate
	\begin{equation}\label{projection-est}
		||\hat{\sigma}_h-\sigma_h||_{0}+||\hat{\boldsymbol{v}}_h-\boldsymbol{v}_h||_{0}\lesssim h^l(||\psi_0||_l+||\phi_1||_l+||\sigma_t||_{L^2(H^{m})}+||\boldsymbol{v}_t||_{L^2(H^{m'})}).
	\end{equation}
	  From \cref{projection} and  \cref{weak problem} it follows 
	\begin{align*}
		&a(\partial_t(\sigma-\sigma_h),\tau_h)+a(\hat{\sigma}_h-\sigma_{h},\tau_h)+b(\hat{\boldsymbol{v}}_h-\boldsymbol{v}_h,\tau_h)=0, \ \forall \tau_h \in\uuline{\mathrm{H}_h} ,\\
		&c(\partial_t(v-v_h),\boldsymbol{w}_h)=b(\boldsymbol{w}_h,\hat{\sigma}_h-\sigma_h), \ \forall  {\boldsymbol{w}}_h \in  \uline{\mathrm{V}_h}.
	\end{align*}
Denote $\tilde\sigma_h:=\hat{\sigma}_h-\sigma_h,\ \tilde {\boldsymbol{v}}_h:=\hat{\boldsymbol{v}}_h-\boldsymbol{v}_h$, and take $\tau_h=\tilde\sigma_h,\ \boldsymbol{w}_h=\tilde{\boldsymbol{v}}_h$ in the above two equations, then we get
\begin{align*}
	a(\partial_t(\sigma-\sigma_h),\tilde\sigma_h)+a(\tilde\sigma_h,\tilde\sigma_h)+c(\partial_t(\boldsymbol{v}-\boldsymbol{v}_h),\tilde{\boldsymbol{v}}_h)=0,
\end{align*}
which yields
\begin{align*}
	a(\partial_t(\sigma-\sigma_h),\tilde\sigma_h)+c(\partial_t(\boldsymbol{v}-\boldsymbol{v}_h),\tilde{\boldsymbol{v}}_h)\leq0.
\end{align*}
This, together with the relations $\sigma-\sigma_h=\sigma-\hat{\sigma}_h+\tilde\sigma_h$ and $ \boldsymbol{v}-\boldsymbol{v}_h=\boldsymbol{v}-\hat{\boldsymbol{v}}_h+\tilde{\boldsymbol{v}}_h$, implies
\begin{align*}
	a(\partial_t\tilde\sigma_h,\tilde\sigma_h)+c(\partial_t \tilde{\boldsymbol{v}}_h,\tilde{\boldsymbol{v}}_h)\leq a(\partial_t(\hat{\sigma}_h-\sigma),\tilde\sigma_h)+c(\partial_t(\hat{\boldsymbol{v}}_h-\boldsymbol{v}),\tilde{\boldsymbol{v}}_h),
\end{align*}
Thus, by setting $E_h:=a(\tilde\sigma_h,\tilde\sigma_h)+c(\tilde{\boldsymbol{v}}_h,\tilde{\boldsymbol{v}}_h) $
 we can obtain
\begin{eqnarray*}
	\frac{\mathrm{d}E_h}{\mathrm{d}t}&\leq& 2a(\partial_t(\hat{\sigma}_h-\sigma),\tilde\sigma_h)+2c(\partial_t(\hat{\boldsymbol{v}}_h-\boldsymbol{v}),\tilde{\boldsymbol{v}}_h)\\
	&\leq& E_h+a(\partial_t(\hat{\sigma}_h-\sigma),\partial_t(\hat{\sigma}_h-\sigma))+c(\partial_t(\hat{\boldsymbol{v}}_h-\boldsymbol{v}),\partial_t(\hat{\boldsymbol{v}}_h-\boldsymbol{v})),
\end{eqnarray*}
where we have used the  following two inequalities: 
$$2a(\sigma,\tau)\leq a(\sigma,\sigma)+a(\tau,\tau), \ \   2c(\boldsymbol{v},\boldsymbol{w})\leq c(\boldsymbol{v},\boldsymbol{v})+c(\boldsymbol{w},\boldsymbol{w}).$$

By the continuous Gronwall inequality we deduce that 
\begin{align*}
	&\quad E_h=||\tilde\sigma_h||_a^2+||\tilde{\boldsymbol{v}}_h||_c^2\\
	&\lesssim||\tilde\sigma_h(0)||_a^2+||\tilde{\boldsymbol{v}}_h(0)||_c^2 
	+\int_{0}^{T}||\partial_t(\hat{\sigma}_h(s)-\sigma(s))||_a^2+||\partial_t(\hat{\boldsymbol{v}}_h(s)-\boldsymbol{v}(s))||_c^2\mathrm{d}s
\end{align*}
As a result, the desired estimate \cref{projection-est} follows from the initial data condition \cref{initial-estimate}, the assumption (A2) and the equivalence of norms in \cref{a-norm-equi}.
 
\end{proof}

\section{Fully discrete mixed finite element method}
\subsection{Fully discrete scheme}

Let $0=t_0<t_1<...<t_{\mathrm{M}}=T$ be a uniform division of time domain $[0,T]$,  with $t_i=i\Delta t \ (i=0,1,...,M)$,   and  the time step size $\Delta t:=\frac{T}{\mathrm{M}}$. 
For any function $\varphi(t)$, we set
\begin{align*}
	\varphi^n:=\varphi(t_n),\ \varphi^{n+\frac{1}{2}}:=\frac{\varphi^n+\varphi^{n+1}}{2},\
	\Delta_t\varphi^{n+\frac{1}{2}}:=\frac{\varphi^{n+1}-\varphi^n}{\Delta t}. 
\end{align*}
Based on the semi-discrete scheme
\cref{semi-disc}, a   Crank-Nicolson full discretization scheme is given as follows: 
Find  $(\sigma_h^{n+1},\boldsymbol{v}_h^{n+1})\in \uuline{\mathrm{H}_h}\times \uline{\mathrm{V}_h}$ for $ 0\leq n\leq M-1 $ such that
\begin{equation}\label{full}
		\left\{
	\begin{array}{ll}
		a(\Delta_t\sigma_h^{n+\frac{1}{2}},\tau_h)+a(\sigma_h^{n+\frac{1}{2}},\tau_h)+b(\boldsymbol{v}_h^{n+\frac{1}{2}},\tau_h)=0, & \forall \tau_h\in\uuline{\mathrm{H}_h}, \\ 
		c(\Delta_t\boldsymbol{v}_h^{n+\frac{1}{2}},\boldsymbol{w}_h)-b(\boldsymbol{w}_h,\sigma_h^{n+\frac{1}{2}})=(\boldsymbol{f}^{n+\frac{1}{2}},\boldsymbol{w}_h), &  \forall \boldsymbol{w}_h\in\uline{\mathrm{V}_h},
	\end{array}
	\right.
\end{equation}
with the initial data $\boldsymbol{v}_h^0=\phi_{1,h}$ and $ \sigma_h^0=\psi_{0,h}$ given by \cref{initial}.  

\begin{lemma}
	The fully discrete scheme \cref{full} admits a unique solution $(\sigma_h^{n+1},\boldsymbol{v}_h^{n+1})$ for $n=0,1,...,M-1.$
\end{lemma}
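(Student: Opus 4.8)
The plan is to observe that, for each fixed $n$, the scheme \cref{full} is a square linear system for the unknown pair $(\sigma_h^{n+1},\boldsymbol{v}_h^{n+1})\in\uuline{\mathrm{H}_h}\times\uline{\mathrm{V}_h}$, with $(\sigma_h^n,\boldsymbol{v}_h^n)$ supplied by the previous time level. Since the spaces are finite dimensional, such a system is uniquely solvable precisely when the associated homogeneous system has only the trivial solution. Thus I would reduce the lemma to that statement and then conclude by induction on $n$, the base case being the prescribed data $\sigma_h^0=\psi_{0,h}$, $\boldsymbol{v}_h^0=\phi_{1,h}$ from \cref{initial}.

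To set up the homogeneous system, I would collect the terms involving $(\sigma_h^{n+1},\boldsymbol{v}_h^{n+1})$ on the left and those involving $(\sigma_h^n,\boldsymbol{v}_h^n)$ and $\boldsymbol{f}^{n+1/2}$ on the right; the left-hand side is then a bilinear form in $(\sigma_h^{n+1},\boldsymbol{v}_h^{n+1})$. Setting $\sigma_h^n=0$, $\boldsymbol{v}_h^n=0$ and $\boldsymbol{f}^{n+1/2}=0$ gives $\Delta_t\sigma_h^{n+1/2}=\sigma_h^{n+1}/\Delta t$, $\sigma_h^{n+1/2}=\sigma_h^{n+1}/2$, and likewise for $\boldsymbol{v}_h$.

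The key step is an energy identity. Taking $\tau_h=\sigma_h^{n+1}$ in the first homogeneous equation and $\boldsymbol{w}_h=\boldsymbol{v}_h^{n+1}$ in the second and adding them, the mixed terms $\tfrac12 b(\boldsymbol{v}_h^{n+1},\sigma_h^{n+1})$ cancel by the opposite signs of the coupling, leaving
\begin{align*}
\Big(\frac{1}{\Delta t}+\frac12\Big)a(\sigma_h^{n+1},\sigma_h^{n+1})+\frac{1}{\Delta t}\,c(\boldsymbol{v}_h^{n+1},\boldsymbol{v}_h^{n+1})=0.
\end{align*}
Both terms on the left are nonnegative, and the coercivity bounds \cref{dis-a-norm-equi}, namely $M_0\|\sigma_h^{n+1}\|_0^2\le a(\sigma_h^{n+1},\sigma_h^{n+1})$ and $\rho_0\|\boldsymbol{v}_h^{n+1}\|_0^2\le c(\boldsymbol{v}_h^{n+1},\boldsymbol{v}_h^{n+1})$, then force $\sigma_h^{n+1}=0$ and $\boldsymbol{v}_h^{n+1}=0$. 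This proves uniqueness for the homogeneous problem, hence unique solvability of \cref{full}.

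There is no serious obstacle here — the whole argument rests on the sign cancellation of the $b(\cdot,\cdot)$ terms and on positivity. The one point worth noting is that, unlike the well-posedness of the saddle-point weak problem, this step does not invoke the discrete inf-sup condition (A1): the Crank-Nicolson time derivative contributes the coercive velocity block $\tfrac{1}{\Delta t}c(\cdot,\cdot)$, so coercivity of $a(\cdot,\cdot)$ and $c(\cdot,\cdot)$ on the full spaces via \cref{dis-a-norm-equi} already suffices, exactly as the symmetric positive definiteness of $A$ and $C$ did for the semi-discrete system \cref{semimatrix}.
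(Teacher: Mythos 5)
Your proposal is correct and follows essentially the same route as the paper's proof: reduce to the homogeneous square system, test with $(\sigma_h^{n+1},\boldsymbol{v}_h^{n+1})$ so the $b(\cdot,\cdot)$ terms cancel, and conclude from the positivity of $a(\cdot,\cdot)$ and $c(\cdot,\cdot)$ via \cref{dis-a-norm-equi}. Your energy identity is the paper's identity $(1+\frac{\Delta t}{2})a(\sigma_h^{n+1},\sigma_h^{n+1})+c(\boldsymbol{v}_h^{n+1},\boldsymbol{v}_h^{n+1})=0$ up to an overall factor of $\Delta t$, and your closing remark that the discrete inf-sup condition (A1) is not needed here matches the paper's implicit reliance on positive definiteness alone.
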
 
\begin{proof}
	We only need  to show that, when given $(\sigma_h^{n},\boldsymbol{v}_h^{n})$,  the linear system \cref{full} admits a unique solution $(\sigma_h^{n+1},\boldsymbol{v}_h^{n+1})$. Since this is a  square system, it suffices to show that  the homogeneous system 
	\begin{equation*} 
		\left\{
	\begin{array}{ll}
		a(\sigma_h^{n+1},\tau_h)+\frac{\Delta t}{2}a(\sigma_h^{n+1},\tau_h)+\frac{\Delta t}{2}b(\boldsymbol{v}_h^{n+1},\tau_h)
		=0& \forall \tau_h\in\uuline{\mathrm{H}_h},\\
		c(\boldsymbol{v}_h^{n+1},\boldsymbol{w}_h)-\frac{\Delta t}{2}b(\boldsymbol{w}_h,\sigma_h^{n+1})
		=0 &\forall \boldsymbol{w}_h\in\uline{\mathrm{V}_h}
	\end{array}
	\right.
\end{equation*}
yields the conclusion that 
\begin{equation}\label{zero}
\sigma_h^{n+1}=0,\quad \boldsymbol{v}_h^{n+1}=0.
\end{equation}   
In fact, taking $\tau_h=\sigma_h^{n+1}$ and $\boldsymbol{w}_h=\boldsymbol{v}_h$ in the above system leads to 
 $$(1+\frac{\Delta t}{2})a(\sigma_h^{n+1},\sigma_h^{n+1})+c(\boldsymbol{v}_h^{n+1},\boldsymbol{v}_h^{n+1})=0,$$ 
 then \cref{zero} follows. This completes the proof.
\end{proof}

\subsection{Stability analysis}

\begin{lemma}
	For $J=1,2,...,M,$ it holds 
	\begin{equation}\label{stable}
		||\sigma_h^J||_a^2+||\boldsymbol{v}_h^J||_c^2\leq2\Delta t\sum_{n=0}^{J-1}(\boldsymbol{f}^{n+\frac{1}{2}},\boldsymbol{v}_h^{n+\frac{1}{2}})+||\sigma_h^0||_a^2+||\boldsymbol{v}_h^0||_c^2.
	\end{equation}
\end{lemma}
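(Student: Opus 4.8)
The plan is to run the standard discrete energy argument for the Crank--Nicolson scheme, exploiting the symmetry of the bilinear forms $a(\cdot,\cdot)$ and $c(\cdot,\cdot)$ together with the non-negativity of the Maxwell damping term. First I would test the two equations in \cref{full} with the midpoint iterates themselves: take $\tau_h=\sigma_h^{n+\frac12}$ in the first equation and $\boldsymbol{w}_h=\boldsymbol{v}_h^{n+\frac12}$ in the second. Adding the two resulting identities, the coupling terms cancel exactly, since $b$ appears as $+b(\boldsymbol{v}_h^{n+\frac12},\sigma_h^{n+\frac12})$ in one and $-b(\boldsymbol{v}_h^{n+\frac12},\sigma_h^{n+\frac12})$ in the other. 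This leaves
\[
a(\Delta_t\sigma_h^{n+\frac12},\sigma_h^{n+\frac12})+c(\Delta_t\boldsymbol{v}_h^{n+\frac12},\boldsymbol{v}_h^{n+\frac12})+a(\sigma_h^{n+\frac12},\sigma_h^{n+\frac12})=(\boldsymbol{f}^{n+\frac12},\boldsymbol{v}_h^{n+\frac12}).
\]

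The key algebraic step is the Crank--Nicolson telescoping identity. Because $a$ is symmetric,
\[
a(\Delta_t\sigma_h^{n+\frac12},\sigma_h^{n+\frac12})=a\!\left(\frac{\sigma_h^{n+1}-\sigma_h^{n}}{\Delta t},\frac{\sigma_h^{n+1}+\sigma_h^{n}}{2}\right)=\frac{1}{2\Delta t}\left(\nm{\sigma_h^{n+1}}_a^2-\nm{\sigma_h^{n}}_a^2\right),
\]
and similarly $c(\Delta_t\boldsymbol{v}_h^{n+\frac12},\boldsymbol{v}_h^{n+\frac12})=\frac{1}{2\Delta t}(\nm{\boldsymbol{v}_h^{n+1}}_c^2-\nm{\boldsymbol{v}_h^{n}}_c^2)$. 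Substituting these, and observing that the leftover term $a(\sigma_h^{n+\frac12},\sigma_h^{n+\frac12})=\nm{\sigma_h^{n+\frac12}}_a^2\geq0$ is exactly the dissipation produced by the Maxwell constitutive law, I would discard it to obtain the one-step inequality
\[
\nm{\sigma_h^{n+1}}_a^2-\nm{\sigma_h^{n}}_a^2+\nm{\boldsymbol{v}_h^{n+1}}_c^2-\nm{\boldsymbol{v}_h^{n}}_c^2\leq 2\Delta t\,(\boldsymbol{f}^{n+\frac12},\boldsymbol{v}_h^{n+\frac12}).
\]

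Finally I would sum this estimate over $n=0,1,\dots,J-1$. The left-hand side telescopes to $\nm{\sigma_h^{J}}_a^2+\nm{\boldsymbol{v}_h^{J}}_c^2-\nm{\sigma_h^{0}}_a^2-\nm{\boldsymbol{v}_h^{0}}_c^2$, and after rearranging this is precisely \cref{stable}.

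As for obstacles, there is essentially no analytic difficulty here: the argument is purely algebraic, and it uses neither a Gronwall step nor any restriction linking $\Delta t$ and $h$, which is exactly what makes the stability \emph{unconditional}. The only points requiring care are checking that the $b$-terms cancel with the correct sign (they do, by the structure just noted) and confirming that discarding the non-negative term $\nm{\sigma_h^{n+\frac12}}_a^2$ only strengthens the inequality in the intended direction. The symmetry of $a$ and $c$ is essential for the telescoping identity; without it the midpoint test would not collapse into a clean difference of squared norms.
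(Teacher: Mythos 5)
Your proof is correct and follows essentially the same argument as the paper: test both equations with the midpoint iterates, use symmetry of $a$ and $c$ to get the Crank--Nicolson telescoping identities, sum over $n$, and drop the nonnegative dissipation term $\Delta t\sum_n\|\sigma_h^{n+\frac12}\|_a^2$. The only (immaterial) difference is that the paper discards this term after summing, whereas you discard it step by step.
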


\begin{proof}
	Take $\tau_h=\sigma_h^{n+\frac{1}{2}}$ and $\boldsymbol{w}_h=\boldsymbol{v}_h^{n+\frac{1}{2}}$ in \cref{full} and add up the two equations,  we then get
	\begin{align*}
		a(\Delta_t\sigma_h^{n+\frac{1}{2}},\sigma_h^{n+\frac{1}{2}})+a(\sigma_h^{n+\frac{1}{2}},\sigma_h^{n+\frac{1}{2}})+c(\Delta_t\boldsymbol{v}_h^{n+\frac{1}{2}},\boldsymbol{v}_h^{n+\frac{1}{2}})=(\boldsymbol{f}^{n+\frac{1}{2}},\boldsymbol{v}_h^{n+\frac{1}{2}}).
	\end{align*}
By the symmetry of $\mathbb{C}^{-1}$, we   deduce that
\begin{align*}
	a(\Delta_t\sigma_h^{n+\frac{1}{2}},\sigma_h^{n+\frac{1}{2}})=(\mathbb{C}^{-1}\Delta_t\sigma_h^{n+\frac{1}{2}},\sigma_h^{n+\frac{1}{2}})&=(\mathbb{C}^{-1}\frac{\sigma_h^{n+1}-\sigma_h^n}{\Delta t},\frac{\sigma_h^{n+1}+\sigma_h^n}{2})\\
	&=\frac{1}{2\Delta t}(||\sigma_h^{n+1}||_a^2-||\sigma_h^{n}||_a^2).
	\end{align*}
Similiarity, we have $$c(\Delta_t\boldsymbol{v}_h^{n+\frac{1}{2}},\boldsymbol{v}_h^{n+\frac{1}{2}})=\frac{1}{2\Delta t}(||\boldsymbol{v}_h^{n+1}||_c^2-||\boldsymbol{v}_h^{n}||_c^2).$$
 From the  two relations above it follows that 
\begin{align*}
	\frac{1}{2\Delta t}(||\sigma_h^{n+1}||_a^2-||\sigma_h^{n}||_a^2+||\boldsymbol{v}_h^{n+1}||_c^2-||\boldsymbol{v}_h^{n}||_c^2)+||\sigma_h^{n+\frac{1}{2}}||_a^2=(\boldsymbol{f}^{n+\frac{1}{2}},\boldsymbol{v}_h^{n+\frac{1}{2}}).
\end{align*}
Summing up this equation from $n=0,1,...,J-1$ gives 
\begin{align*}
	||\sigma_h^J||_a^2+||\boldsymbol{v}_h^J||_c^2+2\Delta t\sum_{n=0}^{J-1}||\sigma_h^{n+\frac{1}{2}}||_a^2=2\Delta t\sum_{n=0}^{J-1}(\boldsymbol{f}^{n+\frac{1}{2}},\boldsymbol{v}_h^{n+\frac{1}{2}})+||\sigma_h^0||_a^2+||\boldsymbol{v}_h^0||_c^2,
\end{align*}
which indicates the desired result.
\end{proof}

\begin{theorem}\label{sta-result}
	Assume $\Delta t<1$, then the full discretization scheme \cref{full} is unconditionally stable in the following sense: For J=1,2,...,M, it holds
\begin{equation}\label{44.4}
	||\boldsymbol{v}_h^J||_{c}^2+||\sigma_h^J||_a^2\lesssim ||\sigma_h^0||_a^2+||\boldsymbol{v}_h^0||_{c}^2+||\boldsymbol{f}||_{L^{\infty}( L^2)}^2. 
\end{equation}
\end{theorem}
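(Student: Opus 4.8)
The plan is to start directly from the energy inequality \cref{stable} established in the previous lemma and to convert it, via the discrete Gronwall inequality, into the desired bound. The only term in \cref{stable} that is not already in the form we want is the forcing contribution $2\Delta t\sum_{n=0}^{J-1}(\boldsymbol{f}^{n+\frac{1}{2}},\boldsymbol{v}_h^{n+\frac{1}{2}})$, so the whole argument reduces to estimating this sum in terms of the discrete energies $\phi_n:=||\sigma_h^n||_a^2+||\boldsymbol{v}_h^n||_c^2$ together with a controllable data term.

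First I would bound each inner product by Cauchy--Schwarz followed by Young's inequality, $(\boldsymbol{f}^{n+\frac{1}{2}},\boldsymbol{v}_h^{n+\frac{1}{2}})\leq\tfrac{1}{2}||\boldsymbol{f}^{n+\frac{1}{2}}||_0^2+\tfrac{1}{2}||\boldsymbol{v}_h^{n+\frac{1}{2}}||_0^2$, and then use the norm equivalence \cref{dis-a-norm-equi} to replace $||\boldsymbol{v}_h^{n+\frac{1}{2}}||_0^2$ by $\rho_0^{-1}||\boldsymbol{v}_h^{n+\frac{1}{2}}||_c^2$. Summing, the forcing part produces $\Delta t\sum_{n=0}^{J-1}||\boldsymbol{f}^{n+\frac{1}{2}}||_0^2\leq T||\boldsymbol{f}||_{L^\infty(L^2)}^2$ (using $J\Delta t\leq T$ and $||\boldsymbol{f}^{n+\frac12}||_0\le||\boldsymbol{f}||_{L^\infty(L^2)}$), which is exactly the data term appearing on the right of \cref{44.4}. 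For the velocity part I would use the convexity of the $c$-norm, $||\boldsymbol{v}_h^{n+\frac{1}{2}}||_c^2\leq\tfrac12(||\boldsymbol{v}_h^n||_c^2+||\boldsymbol{v}_h^{n+1}||_c^2)$, and reindex so that the sum is expressed through $\phi_0,\dots,\phi_J$.

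The main obstacle is that the reindexed velocity sum contains the endpoint term $||\boldsymbol{v}_h^J||_c^2$ (arising from $n=J-1$ in the midpoint value), which is bounded by $\phi_J$ itself and hence appears on the \emph{right}-hand side; the discrete Gronwall inequality stated above, however, only admits $\phi_s$ with $s\leq J-1$. This is precisely where the hypothesis $\Delta t<1$ enters: the coefficient of $||\boldsymbol{v}_h^J||_c^2$ is proportional to $\Delta t$, so once $\Delta t$ is small enough this coefficient is strictly below $1$ and the term can be absorbed into the left-hand side. Dividing through then leaves an inequality of the form $\phi_J\leq g_0+\sum_{s=0}^{J-1}k_s\phi_s$, with $g_0\lesssim ||\sigma_h^0||_a^2+||\boldsymbol{v}_h^0||_c^2+||\boldsymbol{f}||_{L^\infty(L^2)}^2$ (the $O(\Delta t)$ multiple of $||\boldsymbol{v}_h^0||_c^2$ being folded in via $\Delta t<1$) and $k_s=O(\Delta t)$.

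Finally I would apply the discrete Gronwall inequality. Since $\sum_{s=0}^{J-1}k_s\lesssim J\Delta t\leq T$ is bounded independently of $h$ and $\Delta t$, the factor $\exp(\sum_s k_s)$ is a uniform constant, and the bound collapses to $\phi_J\lesssim g_0\lesssim||\sigma_h^0||_a^2+||\boldsymbol{v}_h^0||_c^2+||\boldsymbol{f}||_{L^\infty(L^2)}^2$, which is \cref{44.4}. I emphasize that no relation between $\Delta t$ and $h$ is ever invoked, so the stability is unconditional in the sense claimed; the condition $\Delta t<1$ serves only to guarantee the absorption step.
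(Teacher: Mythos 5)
Your proposal is correct and takes essentially the same route as the paper's proof: starting from \cref{stable}, applying Cauchy--Schwarz/Young to the forcing sum, bounding the midpoint values $||\boldsymbol{v}_h^{n+\frac{1}{2}}||_c^2$ by convexity, absorbing the resulting endpoint term $||\boldsymbol{v}_h^J||_c^2$ into the left-hand side via the smallness of $\Delta t$, and concluding with the discrete Gronwall inequality. The only discrepancy is that the paper uses the $\rho$-weighted Young inequality $2(\boldsymbol{f}^{n+\frac{1}{2}},\boldsymbol{v}_h^{n+\frac{1}{2}})\leq \rho_0^{-1}||\boldsymbol{f}^{n+\frac{1}{2}}||_0^2+||\boldsymbol{v}_h^{n+\frac{1}{2}}||_c^2$, which makes the endpoint coefficient exactly $\Delta t/2$ so that the stated hypothesis $\Delta t<1$ suffices verbatim, whereas your unweighted version leaves the coefficient $\rho_0^{-1}\Delta t/2$ and hence requires $\Delta t<2\rho_0$ --- harmless for the ``unconditional'' character of the result, since $\rho_0$ is a fixed material constant independent of $h$ and $\Delta t$, but you should weight the inequality as the paper does to recover the theorem exactly as stated.
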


\begin{proof}
	From \cref{stable} and Cauchy-Schwarz inequality we get 
	\begin{align*}
		||\sigma_h^J||_a^2+||\boldsymbol{v}_h^J||_c^2\leq\Delta t\sum_{n=0}^{J-1}\rho^{-1}||\boldsymbol{f}^{n+\frac{1}{2}}||_{0}^2+\Delta t\sum_{n=0}^{J-1}||\boldsymbol{v}_h^{n+\frac{1}{2}}||_{c}^2+||\sigma_h^0||_a^2+||\boldsymbol{v}_h^0||_c^2.
	\end{align*}
On the other hand,
\begin{align*}
	\sum_{n=0}^{J-1}||\boldsymbol{v}_h^{n+\frac{1}{2}}||_{c}^2=\sum_{n=0}^{J-1}||\frac{\boldsymbol{v}_h^{n+1}+\boldsymbol{v}_h^n}{2}||_{c}^2&\leq\frac{1}{2}(\sum_{n=0}^{J-1}||\boldsymbol{v}_h^{n+1}||_{c}^2+\sum_{n=0}^{J-1}||\boldsymbol{v}_h^{n}||_{c}^2) \\
	&\leq\sum_{n=0}^{J-1}||\boldsymbol{v}_h^{n}||_{c}^2+\frac{1}{2}||\boldsymbol{v}_h^J||_{c}^2.
\end{align*}
Since $\Delta t<1$, the two inequalities above  indicate
\begin{align*}
	\frac{1}{2}||\boldsymbol{v}_h^J||_{c}^2+||\sigma_h^J||_a^2\leq\Delta t\sum_{n=0}^{J-1}\rho^{-1}||\boldsymbol{f}^{n+\frac{1}{2}}||_{0}^2+\Delta t\sum_{n=0}^{J-1}||\boldsymbol{v}_h^{n}||_{c}^2
	+||\sigma_h^0||_a^2+||\boldsymbol{v}_h^0||_c^2,
\end{align*}
which, together with the discrete Gronwall's inequality, yields \begin{align*}
	||\boldsymbol{v}_h^J||_{c}^2+||\sigma_h^J||_a^2\leq \left(||\sigma_h^0||_a^2+||\boldsymbol{v}_h^0||_{c}^2+T\rho^{-1}||\boldsymbol{f}||_{L^{\infty}( L^2)}^2\right)\times \exp(2T),
\end{align*}
i.e., \cref{44.4} holds true.
\end{proof}

\subsection{Error estimation}

\begin{lemma}\label{lemma4.3}
	Let $(\sigma_h^j,\boldsymbol{v}_h^j)$ $(j=1,2,\cdots,M)$ and $(\hat{\sigma}_h,\hat{\boldsymbol{v}}_h)$ be respectively the solutions to the fully discrete scheme \cref{full} and the semi-discrete "elliptic projection" problem \cref{projection}, then  it holds 
	\begin{equation}\label{4.3}
		\begin{aligned}
			&\quad \mathop{\mathrm{max}}_{1\leq j\leq M}||\sigma_h^{j}-\hat{\sigma}_h(t_{j})||_a+\mathop{\mathrm{max}}_{1\leq j\leq M}||\boldsymbol{v}_h^{j}-\hat{\boldsymbol{v}}_h(t_{j})||_c \\
			&\lesssim ||\sigma_h^{0}-\hat{\sigma}_h(0)||_0+||\boldsymbol{v}_h^{0}-\hat{\boldsymbol{v}}_h(0)||_0\\
			&\quad+
			\Delta t(\sum_{j=0}^{M-1}||\partial_t\sigma^{j+\frac{1}{2}}-\Delta_t\hat{\sigma}_h^{j+\frac{1}{2}}||_{0}+\sum_{j=0}^{M-1}||\partial_t \boldsymbol{v}^{j+\frac{1}{2}}-\Delta_t\hat{\boldsymbol{v}}_h^{j+\frac{1}{2}}||_{0}).
		\end{aligned}
	\end{equation}
\end{lemma}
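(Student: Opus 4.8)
The plan is to derive an energy estimate directly for the discrete error between the fully discrete solution and the time-sampled elliptic projection, mirroring the stability analysis of \cref{full}. Accordingly I set $\theta^n:=\sigma_h^n-\hat\sigma_h(t_n)$ and $\zeta^n:=\boldsymbol v_h^n-\hat{\boldsymbol v}_h(t_n)$, and use the averaging/difference symbols $\hat\sigma_h^{n+\frac12}$, $\Delta_t\hat\sigma_h^{n+\frac12}$ exactly as for the discrete variables. First I would evaluate the defining relations of the projection \cref{projection} at $t_n$ and $t_{n+1}$ (they hold for every $t\in[0,T]$) and average them: the first line becomes $a(\hat\sigma_h^{n+\frac12},\tau_h)+b(\hat{\boldsymbol v}_h^{n+\frac12},\tau_h)=-a(\partial_t\sigma^{n+\frac12},\tau_h)$, and adding $a(\Delta_t\hat\sigma_h^{n+\frac12},\tau_h)$ to both sides makes its left-hand side coincide in form with the first line of \cref{full}. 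Subtracting from \cref{full} then gives the error equation
\[
a(\Delta_t\theta^{n+\frac12},\tau_h)+a(\theta^{n+\frac12},\tau_h)+b(\zeta^{n+\frac12},\tau_h)=a(\chi_\sigma^{n+\frac12},\tau_h),
\]
with $\chi_\sigma^{n+\frac12}:=\partial_t\sigma^{n+\frac12}-\Delta_t\hat\sigma_h^{n+\frac12}$; the identical treatment of the second line, in which the averaged load $(\boldsymbol f^{n+\frac12},\boldsymbol w_h)$ cancels against that in \cref{full}, produces
\[
c(\Delta_t\zeta^{n+\frac12},\boldsymbol w_h)-b(\boldsymbol w_h,\theta^{n+\frac12})=c(\chi_v^{n+\frac12},\boldsymbol w_h),
\]
with $\chi_v^{n+\frac12}:=\partial_t\boldsymbol v^{n+\frac12}-\Delta_t\hat{\boldsymbol v}_h^{n+\frac12}$.

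Next I would run the same energy manipulation used to derive \cref{stable}. Taking $\tau_h=\theta^{n+\frac12}$ and $\boldsymbol w_h=\zeta^{n+\frac12}$ and adding the two error equations, the coupling terms $\pm b(\zeta^{n+\frac12},\theta^{n+\frac12})$ cancel, while $a(\Delta_t\theta^{n+\frac12},\theta^{n+\frac12})=\frac{1}{2\Delta t}(\|\theta^{n+1}\|_a^2-\|\theta^n\|_a^2)$ and likewise for $\zeta$. Multiplying by $2\Delta t$, summing over $n=0,\dots,J-1$, and discarding the nonnegative term $2\Delta t\sum_n\|\theta^{n+\frac12}\|_a^2$ leaves
\[
\|\theta^J\|_a^2+\|\zeta^J\|_c^2\le\|\theta^0\|_a^2+\|\zeta^0\|_c^2+2\Delta t\sum_{n=0}^{J-1}\big(a(\chi_\sigma^{n+\frac12},\theta^{n+\frac12})+c(\chi_v^{n+\frac12},\zeta^{n+\frac12})\big).
\]
To close, I would apply the Cauchy-Schwarz inequality in the $a(\cdot,\cdot)$ and $c(\cdot,\cdot)$ inner products, bound each factor $\|\theta^{n+\frac12}\|_a$, $\|\zeta^{n+\frac12}\|_c$ by $\Phi:=\max_{0\le j\le M}(\|\theta^j\|_a^2+\|\zeta^j\|_c^2)^{1/2}$, and pull $\Phi$ out of the sum. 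Taking for $J$ the index attaining $\Phi$ yields $\Phi^2\le\Phi_0^2+2\Phi\,\mathcal R$ with $\Phi_0^2:=\|\theta^0\|_a^2+\|\zeta^0\|_c^2$ and $\mathcal R:=\Delta t\sum_{n=0}^{M-1}(\|\chi_\sigma^{n+\frac12}\|_a+\|\chi_v^{n+\frac12}\|_c)$; solving this quadratic inequality gives $\Phi\le\Phi_0+2\mathcal R$. Finally the norm equivalences \cref{a-norm-equi} replace $\|\cdot\|_a,\|\cdot\|_c$ by $\|\cdot\|_0$ in $\Phi_0$ and $\mathcal R$, which is exactly \cref{4.3}.

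The delicate points are two. First, the error equation must collapse to the clean time-truncation residuals $\chi_\sigma,\chi_v$ with no leftover spatial-projection terms; this works only because the source $-a(\partial_t\sigma,\cdot)$ in \cref{projection} is tuned so that, after adding $a(\Delta_t\hat\sigma_h^{n+\frac12},\cdot)$, it matches the structure of \cref{full} term by term, and because the $b(\cdot,\cdot)$ couplings cancel on summation of the two tested equations. Second, reproducing the $\ell^1$-in-time sum $\Delta t\sum_j\|\cdot\|_0$ stated in \cref{4.3} is what forces the maximizing-index plus quadratic-inequality closing above: a Young's-inequality/discrete-Gronwall route would instead yield the (tighter but different) $\ell^2$-in-time data norm and hence not the stated form. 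I expect this final bookkeeping to close the argument to be the main obstacle, the rest being a routine replay of the stability proof.
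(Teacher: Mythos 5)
Your proposal is correct and follows essentially the same route as the paper's proof: the same comparison of $(\sigma_h^n,\boldsymbol{v}_h^n)$ with the time-sampled elliptic projection, the same error equations obtained by evaluating \cref{projection} at $t_n,t_{n+1}$ and subtracting from \cref{full}, the same cancellation of the $b(\cdot,\cdot)$ terms after testing with the midpoint errors, dropping the nonnegative $a(\theta^{n+\frac12},\theta^{n+\frac12})$ term, and telescoping the $a$- and $c$-energies. The only divergence is the final bookkeeping: the paper bounds the residual sums via Young's inequality (keeping the $\ell^1$-in-time sum as one factor) and then absorbs a $\tfrac12\max_j\|\cdot\|_0^2$ term into the left-hand side, whereas you use Cauchy-Schwarz in the $a$-/$c$-inner products together with the maximizing-index and quadratic-inequality trick -- a cosmetic variation that, if anything, is slightly cleaner, since it avoids tracking the norm-equivalence constants ($M_0$, $\rho_0$) needed to justify the paper's absorption step.
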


\begin{proof}
	Setting $\psi_h^{\alpha}:=\sigma_h^{\alpha}-\hat{\sigma}_h(t_{\alpha}),\ \boldsymbol{r}_h^{\alpha}:=\boldsymbol{v}_h^{\alpha}-\hat{\boldsymbol{v}}_h(t_{\alpha})$ for any index $\alpha$ and taking $t=t_j,\ t_{j+1}$ in \eqref{projection} respectively, from \eqref{full} we   have, for $\forall\tau_h\in\uuline{\mathrm{H}_h}$ and $ \boldsymbol{w}_h\in\uline{\mathrm{V}_h}$,
	\begin{equation*}\label{difference}
		\left\{
		\begin{array}{ll}
			a(\psi_h^{j+\frac{1}{2}},\tau_h)+b(\boldsymbol{r}_h^{j+\frac{1}{2}},\tau_h)=-a(\Delta_t\psi_h^{j+\frac{1}{2}},\tau_h)+a(\partial_t\sigma^{j+\frac{1}{2}}-\Delta_t\hat{\sigma}_h^{j+\frac{1}{2}},\tau_h),\\
			b(\boldsymbol{w}_h,\psi_h^{j+\frac{1}{2}})=c(\Delta_t\boldsymbol{r}_h^{j+\frac{1}{2}},\boldsymbol{w}_h)+c(\Delta_t\hat{\boldsymbol{v}}_h^{j+\frac{1}{2}}-\partial_t \boldsymbol{v}^{j+\frac{1}{2}},\boldsymbol{w}_h).
		\end{array}
		\right.
	\end{equation*}
Take $\tau_h=\psi_h^{j+\frac{1}{2}}$ and $\boldsymbol{w}_h=\boldsymbol{r}_h^{j+\frac{1}{2}}$  in these two equations, respectively,   then we obtain
	\begin{align*}
		& a(\psi_h^{j+\frac{1}{2}},\psi_h^{j+\frac{1}{2}})+b(\boldsymbol{r}_h^{j+\frac{1}{2}},\psi_h^{j+\frac{1}{2}})=-a(\Delta_t\psi_h^{j+\frac{1}{2}},\psi_h^{j+\frac{1}{2}}) +a(\partial_t\sigma^{j+\frac{1}{2}}-\Delta_t\hat{\sigma}_h^{j+\frac{1}{2}},\psi_h^{j+\frac{1}{2}}),\\
	&  b(\boldsymbol{r}_h^{j+\frac{1}{2}},\psi_h^{j+\frac{1}{2}})=c(\Delta_t\boldsymbol{r}_h^{j+\frac{1}{2}},\boldsymbol{r}_h^{j+\frac{1}{2}})+c(\Delta_t\hat{\boldsymbol{v}}_h^{j+\frac{1}{2}}-\partial_t\boldsymbol{v}^{j+\frac{1}{2}},\boldsymbol{r}_h^{j+\frac{1}{2}}).
\end{align*}
Subtracting the second one of the above two  equations from the first one, we arrive at 
\begin{equation*}\label{4.3.3}
	\begin{aligned}
		&a(\psi_h^{j+\frac{1}{2}},\psi_h^{j+\frac{1}{2}})+c(\Delta_t\boldsymbol{r}_h^{j+\frac{1}{2}},\boldsymbol{r}_h^{j+\frac{1}{2}})+a(\Delta_t\psi_h^{j+\frac{1}{2}},\psi_h^{j+\frac{1}{2}})\\
		&=a(\partial_t\sigma^{j+\frac{1}{2}}-\Delta_t\hat{\sigma}_h^{j+\frac{1}{2}},\psi_h^{j+\frac{1}{2}})+c(\partial_t \boldsymbol{v}^{j+\frac{1}{2}}-\Delta_t\hat{\boldsymbol{v}}_h^{j+\frac{1}{2}},\boldsymbol{r}_h^{j+\frac{1}{2}}),
	\end{aligned}
\end{equation*} 
which implies
\begin{equation}\label{4.3.4}
	\begin{aligned}
		&\frac{1}{2\Delta t}(||\psi_h^{j+1}||_a^2-||\psi_h^{j}||_a^2+||\boldsymbol{r}_h^{j+1}||_c^2-||\boldsymbol{r}_h^{j}||_c^2) \\
		&\lesssim a(\partial_t\sigma^{j+\frac{1}{2}}-\Delta_t\hat{\sigma}_h^{j+\frac{1}{2}},\psi_h^{j+\frac{1}{2}})+c(\partial_t \boldsymbol{v}^{j+\frac{1}{2}}-\Delta_t\hat{\boldsymbol{v}}_h^{j+\frac{1}{2}},\boldsymbol{r}_h^{j+\frac{1}{2}}),
	\end{aligned}
\end{equation}
Multipling this inequality by $2\Delta t, $  and summing these equations for $j=0,1,\cdots,n-1$ ($1\leq n\leq M$), we get
\begin{equation}\label{4.3.5}
		||\psi_h^n||_a^2+||\boldsymbol{r}_h^n||_c^2\lesssim ||\psi_h^0||_a^2+||\boldsymbol{r}_h^0||_c^2+A_1 +A_2,
\end{equation}
with $$A_1:=\Delta t\sum_{j=0}^{n-1}a(\partial_t\sigma^{j+\frac{1}{2}}-\Delta_t\hat{\sigma}_h^{j+\frac{1}{2}},\psi_h^{j+1}+\psi_h^j),$$
	$$A_2:= \Delta t\sum_{j=0}^{n-1}c(\partial_t \boldsymbol{v}^{j+\frac{1}{2}}-\Delta_t\hat{\boldsymbol{v}}_h^{j+\frac{1}{2}},\boldsymbol{r}_h^{j+1}+\boldsymbol{r}_h^j).$$
For the term $A_1$,  it holds 
\begin{equation}\label{4.3.6}
	\begin{aligned}
		&A_1\leq \tilde{C} \Delta t \sum_{j=0}^{n-1}||\partial_t\sigma^{j+\frac{1}{2}}-\Delta_t\hat{\sigma}_h^{j+\frac{1}{2}}||_{0}||\psi_h^{j+1}+\psi_h^j||_{0} \\
		&\quad \leq 2 \tilde{C} \Delta t\mathop{\mathrm{max}}_{0\leq j\leq n}||\psi_h^{j}||_{0}(\sum_{j=0}^{n-1}||\partial_t\sigma^{j+\frac{1}{2}}-\Delta_t\hat{\sigma}_h^{j+\frac{1}{2}}||_{0})  \\
		&\quad \leq\frac{1}{2}\mathop{\mathrm{max}}_{0\leq j\leq n }||\psi_h^{j }||_{0}^2+2(\tilde{C}\Delta t)^2(\sum_{j=0}^{n-1}||\partial_t\sigma^{j+\frac{1}{2}}-\triangle_t\hat{\sigma}_h^{j+\frac{1}{2}}||_{0})^2,
	\end{aligned}
\end{equation}
where   $\tilde{C}>0$ is a constant depending on $\mathbb{C}^{-1}$ and $\rho$.  Similarly, we have 
\begin{equation}\label{4.3.7}
	\begin{aligned}
		A_2\leq\frac{1}{2}\mathop{\mathrm{max}}_{0\leq j\leq n }||\boldsymbol{r}_h^{j }||_{0}^2+2(\rho\Delta t)^2(\sum_{j=0}^{n-1}||\partial_t \boldsymbol{v}^{j+\frac{1}{2}}-\Delta_t\hat{\boldsymbol{v}}_h^{j+\frac{1}{2}}||_{0})^2.
	\end{aligned}
\end{equation}
Putting \eqref{4.3.7} and \eqref{4.3.6} into \eqref{4.3.5} and noticing that $1\leq n\leq M$, we finally get 
\begin{equation*}\label{4.3.8}
	\begin{aligned}
		&\quad \mathop{\mathrm{max}}_{1\leq j\leq M}||\psi_h^{j}||_a^2+\mathop{\mathrm{max}}_{1\leq j\leq M}||\boldsymbol{r}_h^{j}||_c^2 \\
		&\lesssim||\psi_h^0||_0^2+||\boldsymbol{r}_h^0||_0^2+( \Delta t)^2(\sum_{j=0}^{M-1}||\partial_t\sigma^{j+\frac{1}{2}}-\Delta_t\hat{\sigma}_h^{j+\frac{1}{2}}||_{0})^2 \\
		&\quad +( \Delta t)^2(\sum_{j=0}^{M-1}||\partial_t \boldsymbol{v}^{j+\frac{1}{2}}-\Delta_t\hat{\boldsymbol{v}}_h^{j+\frac{1}{2}}||_{0})^2,
	\end{aligned}	
\end{equation*}
i.e. \cref{4.3} holds true.
\end{proof}

\begin{lemma}\label{lemma4.4}
	Under the assumption (A2)  and the condition  that
	\begin{equation}\label{tt-reg}
	\sigma_{tt}\in{\mathrm{L}^{\infty}([0,T],\uuline{L}^2(\Omega))},\quad \boldsymbol{v}_{tt}\in{\mathrm{L}^{\infty}([0,T],\underline{L}^2(\Omega))},
	\end{equation}
	 it holds,  for $0\leq j\leq M-1$, 
	\begin{align*}
		&\quad \Delta t(||\partial_t\sigma^{j+\frac{1}{2}}-\Delta_t\hat{\sigma}_h^{j+\frac{1}{2}}||_{0}+||\partial_t\boldsymbol{v}^{j+\frac{1}{2}}-\Delta_t\hat{\boldsymbol{v}}_h^{j+\frac{1}{2}}||_{0})  \\
		&\lesssim h^l (||\sigma||_{L^{\infty}(H^{m})}+||\boldsymbol{v}||_{L^{\infty}(H^{m'})})+(\Delta t)^2(||\boldsymbol{v}_{tt}||_{\mathrm{L}^{\infty}(L^2)}+||\sigma_{tt}||_{\mathrm{L}^{\infty}(L^2)}).
	\end{align*}
\end{lemma}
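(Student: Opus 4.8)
The plan is to treat the stress and velocity contributions by the same two–step splitting, so I would describe the argument for the stress term and handle the velocity term identically, replacing $\sigma$ by $\boldsymbol v$ and using the companion (second) estimates in \cref{projectionerror} together with the condition \cref{tt-reg}. First I would insert $\hat\sigma_h=\sigma+(\hat\sigma_h-\sigma)$ and use the linearity of $\Delta_t$ to write
\begin{equation*}
\partial_t\sigma^{j+\frac12}-\Delta_t\hat\sigma_h^{j+\frac12}
=\big(\partial_t\sigma^{j+\frac12}-\Delta_t\sigma^{j+\frac12}\big)-\Delta_t(\hat\sigma_h-\sigma)^{j+\frac12},
\end{equation*}
so that, by the triangle inequality, it suffices to bound the prefactored temporal term $\Delta t\,\|\partial_t\sigma^{j+\frac12}-\Delta_t\sigma^{j+\frac12}\|_0$ and the prefactored projection term $\Delta t\,\|\Delta_t(\hat\sigma_h-\sigma)^{j+\frac12}\|_0$ separately.

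For the projection term the prefactor $\Delta t$ simply cancels the difference quotient. Since $\Delta_t(\hat\sigma_h-\sigma)^{j+\frac12}=\frac1{\Delta t}\big[(\hat\sigma_h-\sigma)(t_{j+1})-(\hat\sigma_h-\sigma)(t_j)\big]$, I obtain
\begin{equation*}
\Delta t\,\|\Delta_t(\hat\sigma_h-\sigma)^{j+\frac12}\|_0
\le\|(\hat\sigma_h-\sigma)(t_{j+1})\|_0+\|(\hat\sigma_h-\sigma)(t_j)\|_0
\le 2\sup_{t\in[0,T]}\|\hat\sigma_h(t)-\sigma(t)\|_0,
\end{equation*}
and the first estimate of \cref{projectionerror}, valid under (A2), bounds the right-hand side by $h^l(\|\sigma\|_{L^\infty(H^m)}+\|\boldsymbol v\|_{L^\infty(H^{m'})})$. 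This already produces the $h^l$ term of the claim.

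The heart of the matter is the temporal consistency term $\partial_t\sigma^{j+\frac12}-\Delta_t\sigma^{j+\frac12}=\tfrac12(\sigma_t(t_j)+\sigma_t(t_{j+1}))-\tfrac1{\Delta t}(\sigma(t_{j+1})-\sigma(t_j))$, which is the trapezoidal-rule error for $\int_{t_j}^{t_{j+1}}\sigma_t\,\mathrm ds$ divided by $\Delta t$. The key observation is that, because of the prefactor $\Delta t$, I do not need the sharp second-order trapezoidal estimate (which would cost a bound in $\sigma_{ttt}$): it is enough to control the oscillation of $\sigma_t$. Concretely, expanding $\sigma$ and $\sigma_t$ about $t_{j+\frac12}$ with integral Taylor remainders, the leading $\sigma_t(t_{j+\frac12})$ contributions cancel and every surviving remainder integrand involves $\sigma_{tt}$ over a subinterval of length at most $\Delta t/2$; this gives $\|\partial_t\sigma^{j+\frac12}-\Delta_t\sigma^{j+\frac12}\|_0\lesssim\Delta t\,\|\sigma_{tt}\|_{L^\infty([t_j,t_{j+1}];L^2)}\le\Delta t\,\|\sigma_{tt}\|_{L^\infty(L^2)}$ by \cref{tt-reg}. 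Multiplying by the prefactor $\Delta t$ converts this into the stated $(\Delta t)^2\|\sigma_{tt}\|_{L^\infty(L^2)}$.

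Summing the stress and velocity bounds then gives the assertion. I expect the only delicate point to be this last consistency step: one must resist aiming for the optimal trapezoidal accuracy and instead settle for the first-order oscillation estimate in $\sigma_{tt}$ (respectively $\boldsymbol v_{tt}$), since only the regularity \cref{tt-reg} on $\sigma_{tt},\boldsymbol v_{tt}$ is assumed; it is precisely the explicit factor $\Delta t$ on the left-hand side that promotes the $O(\Delta t)$ oscillation bound to the required $O((\Delta t)^2)$ order.
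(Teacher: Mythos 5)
Your proposal is correct and follows essentially the same route as the paper: the same splitting $\partial_t\sigma^{j+\frac12}-\Delta_t\hat\sigma_h^{j+\frac12}=(\partial_t\sigma^{j+\frac12}-\Delta_t\sigma^{j+\frac12})-\Delta_t(\hat\sigma_h-\sigma)^{j+\frac12}$, the same cancellation of the prefactor $\Delta t$ against the difference quotient for the projection part via \cref{projectionerror}, and the same first-order (in $\sigma_{tt}$, $\boldsymbol v_{tt}$) Taylor-remainder bound for the consistency part, which the $\Delta t$ prefactor upgrades to $O((\Delta t)^2)$. The only cosmetic difference is that you expand about $t_{j+\frac12}$ while the paper expands about $t_j$ in integral-remainder form; both yield the identical estimate under \cref{tt-reg}.
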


\begin{proof} 
On one hand, using the Taylor expansion, we have
\begin{align*}
	\partial_t\sigma^{j+\frac{1}{2}}-\Delta_t\sigma^{j+\frac{1}{2}}=\frac{1}{2}\int_{t_j}^{t_{j+1}}\sigma_{tt}(s)\mathrm{d}s-\frac{1}{\Delta t}\int_{t_j}^{t_{j+1}}(t_{j+1}-s)\sigma_{tt}(s)\mathrm{d}s,
\end{align*}
which gives 
 $$||\partial_t\sigma^{j+\frac{1}{2}}-\Delta_t\sigma^{j+\frac{1}{2}}||_{0}\lesssim \Delta t ||\sigma_{tt}||_{\mathrm{L}^{\infty}(L^2)}.$$
On the other hand, from  \cref{projectionerror} it follows
\begin{align*}
	\Delta t ||\Delta_t\sigma^{j+\frac{1}{2}}-\Delta_t\hat{\sigma}_h^{j+\frac{1}{2}}||_{0}&=|| {\sigma(t_{j+1})-\hat{\sigma}_h(t_{j+1})-(\sigma(t_j)-\hat{\sigma}(t_j))} ||_{0} \\
	&\lesssim {h^{l}}(||\sigma||_{L^{\infty}(H^{m})}+||\boldsymbol{v}||_{L^{\infty}(H^{m'})})  .
\end{align*}
As a result,  by the triangle inequality we get
%
%
%
\begin{align*}
	\Delta t||\partial_t\sigma^{j+\frac{1}{2}}-\Delta_t\hat{\sigma}_h^{j+\frac{1}{2}}||_{0}\lesssim h^{l}(||\sigma||_{L^{\infty}(H^{m})}+||\boldsymbol{v}||_{L^{\infty}(H^{m'})})+(\Delta t)^2||\sigma_{tt}||_{\mathrm{L}^{\infty}(L^2)}.
\end{align*}
In the same way, we can obtain
\begin{align*}
	\Delta t||\partial_t\boldsymbol{v}^{j+\frac{1}{2}}-\Delta_t\hat{\boldsymbol{v}}_h^{j+\frac{1}{2}}||_{0}\lesssim h^{l}(||\sigma||_{L^{\infty}(H^{m})}+||\boldsymbol{v}||_{L^{\infty}(H^{m'})})+(\Delta t)^2||\boldsymbol{v}_{tt}||_{\mathrm{L}^{\infty}(L^2)}.
\end{align*}
This finishes the proof.
\end{proof}
%

Based on  \cref{projectionerror}, \cref{initial-estimate}, and  \cref{lemma4.3,lemma4.4}, it is easy to give the following error estimate for the   fully discrete finite element scheme.

\begin{theorem}\label{theorem4.2}
	Let $(\sigma(t),\boldsymbol{v}(t))$  be the solution to the weak problem \cref{weak problem} and $(\sigma_h^{n},\boldsymbol{v}_h^{n})\ (n= 1,...,M)$ be the solution to the fully discrete scheme \cref{full} such that the assumptions (A1), (A2),  \cref{initial-estimate,tt-reg}  hold. 
	Then it holds the   error estimate
	\begin{equation}\label{main-est}
		\begin{aligned}
			&\mathop{\mathrm{max}}_{1\leq n\leq M}||\sigma(t_{n})-\sigma_h(t_{n })||_a+\mathop{\mathrm{max}}_{1\leq n\leq M}||\boldsymbol{v}(t_{n})-\boldsymbol{v}_h(t_{n})||_c \lesssim C_1 h^l+C_2(\Delta t)^2,
		\end{aligned}
	\end{equation}
where $C_1:= ||\sigma||_{L^{\infty}(H^{m})}+||\boldsymbol{v}||_{L^{\infty}(H^{m'})}+||\psi_0||_l+||\phi_1||_l$ and $ C_2:=||\sigma_{tt}||_{\mathrm{L}^{\infty}(L^2)}+||\boldsymbol{v}_{tt}||_{\mathrm{L}^{\infty}(L^2)}.$
\end{theorem}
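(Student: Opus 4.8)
The plan is to run the standard elliptic-projection comparison, using the projection $(\hat\sigma_h,\hat{\boldsymbol v}_h)$ of \cref{projection} as an intermediary. First I would split each error by the triangle inequality, writing
\[
\sigma(t_n)-\sigma_h^n=\bigl(\sigma(t_n)-\hat\sigma_h(t_n)\bigr)+\bigl(\hat\sigma_h(t_n)-\sigma_h^n\bigr),
\]
and analogously for $\boldsymbol v$. The projection part $\sigma(t_n)-\hat\sigma_h(t_n)$ is controlled at once by the approximation bound \cref{projectionerror}, which, after passing from $\|\cdot\|_0$ to $\|\cdot\|_a,\|\cdot\|_c$ via the norm equivalence \cref{a-norm-equi}, contributes the $h^l\|\sigma\|_{L^\infty(H^m)}$ and $h^l\|\boldsymbol v\|_{L^\infty(H^{m'})}$ pieces of $C_1h^l$. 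The discrete-versus-projection part $\hat\sigma_h(t_n)-\sigma_h^n$ is exactly the quantity estimated in \cref{lemma4.3}, so after this splitting it only remains to bound the right-hand side of \cref{4.3}.

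Next I would dispose of the two types of data on the right of \cref{4.3}. For the initial contributions I would factor through the exact data: writing $\sigma_h^0-\hat\sigma_h(0)=(I_{\mathrm H_h}\psi_0-\psi_0)+(\sigma(0)-\hat\sigma_h(0))$ and using \cref{initial-estimate} for the interpolation term and \cref{projectionerror} at $t=0$ for the projection term (and likewise for $\boldsymbol v_h^0-\hat{\boldsymbol v}_h(0)$ through $\phi_1$) yields the $h^l(\|\psi_0\|_l+\|\phi_1\|_l)$ portion of $C_1h^l$. The remaining term is the accumulated consistency sum $\Delta t\sum_{j}\bigl(\|\partial_t\sigma^{j+\frac12}-\Delta_t\hat\sigma_h^{j+\frac12}\|_0+\|\partial_t\boldsymbol v^{j+\frac12}-\Delta_t\hat{\boldsymbol v}_h^{j+\frac12}\|_0\bigr)$, which is precisely what \cref{lemma4.4} is designed to bound; inserting that estimate produces the $(\Delta t)^2\bigl(\|\sigma_{tt}\|_{L^\infty(L^2)}+\|\boldsymbol v_{tt}\|_{L^\infty(L^2)}\bigr)$ part of $C_2(\Delta t)^2$ together with a further $h^l$ term absorbed into $C_1h^l$. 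Collecting the three groups of contributions and taking the maximum over $1\le n\le M$ then gives \cref{main-est}; no separate Gronwall step is needed here, since the energy summation is already internal to \cref{lemma4.3}.

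The delicate point is the passage from the per-interval estimate of \cref{lemma4.4} to the full sum in \cref{4.3} without losing a power of $\Delta t$. The danger is the spatial part of each summand, $\|\Delta_t(\sigma-\hat\sigma_h)^{j+\frac12}\|_0=\tfrac1{\Delta t}\|(\sigma-\hat\sigma_h)(t_{j+1})-(\sigma-\hat\sigma_h)(t_j)\|_0$: bounding it term by term and summing $M=T/\Delta t$ times would cost a spurious factor $1/\Delta t$. The device I would use is to telescope instead, estimating $\sum_j\|(\sigma-\hat\sigma_h)(t_{j+1})-(\sigma-\hat\sigma_h)(t_j)\|_0\le\int_0^T\|(\sigma-\hat\sigma_h)_t\|_0\,\mathrm ds$ and then invoking the derivative bound in the second line of \cref{projectionerror}, so that the spatial error stays $O(h^l)$ uniformly in $\Delta t$. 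The temporal part rests on the Crank--Nicolson midpoint symmetry already exploited in the proof of \cref{lemma4.4} (the integral-remainder kernel has zero mean about $t_{j+\frac12}$), which is what upgrades the naive truncation to the $(\Delta t)^2$ accuracy recorded in $C_2$. Once these two mechanisms are in place the assembly is routine bookkeeping.
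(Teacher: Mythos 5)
Your proposal follows the same route as the paper's own (one-sentence) proof: triangle inequality through the elliptic projection \cref{projection}, the approximation bounds \cref{projectionerror} and \cref{initial-estimate} for the projection and initial-data contributions, \cref{lemma4.3} for the discrete-minus-projection part, and \cref{lemma4.4} for the consistency sum. You go beyond the paper in the one place where this assembly is actually delicate: \cref{lemma4.4} is a \emph{per-interval} bound, while the right-hand side of \cref{4.3} involves the sum over $j=0,\dots,M-1$, so inserting \cref{lemma4.4} term by term multiplies its right-hand side by $M=T/\Delta t$ and yields only $h^l/\Delta t+\Delta t$ rather than $h^l+(\Delta t)^2$. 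Your telescoping device for the spatial part,
\begin{equation*}
\Delta t\sum_{j}\bigl\|\Delta_t(\sigma-\hat\sigma_h)^{j+\frac12}\bigr\|_0
=\sum_{j}\bigl\|(\sigma-\hat\sigma_h)(t_{j+1})-(\sigma-\hat\sigma_h)(t_j)\bigr\|_0
\le\int_0^T\bigl\|(\sigma-\hat\sigma_h)_t\bigr\|_0\,\mathrm{d}s,
\end{equation*}
combined with the second line of \cref{projectionerror}, is the correct repair for that half (note it replaces $\|\sigma\|_{L^{\infty}(H^m)}$ in $C_1$ by norms of $\sigma_t,\boldsymbol{v}_t$, which (A2) supplies).

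However, your treatment of the temporal part has a genuine gap, and it is the same gap hidden in the paper's own assembly. The zero-mean (midpoint-symmetry) property of the kernel $K_j(s)=\tfrac12-\tfrac{t_{j+1}-s}{\Delta t}$ cannot be exploited under the stated regularity \cref{tt-reg}: to use it one must write $\sigma_{tt}(s)=\sigma_{tt}(t_{j+\frac12})+\int_{t_{j+\frac12}}^{s}\sigma_{ttt}(r)\,\mathrm{d}r$, i.e.\ one needs third time derivatives. With only $\sigma_{tt},\boldsymbol{v}_{tt}\in L^{\infty}([0,T],L^2)$ the sharp per-step bound is $\|\partial_t\sigma^{j+\frac12}-\Delta_t\sigma^{j+\frac12}\|_0\le\tfrac12\int_{t_j}^{t_{j+1}}\|\sigma_{tt}\|_0\,\mathrm{d}s$, whence
\begin{equation*}
\Delta t\sum_{j=0}^{M-1}\bigl\|\partial_t\sigma^{j+\frac12}-\Delta_t\sigma^{j+\frac12}\bigr\|_0
\lesssim T\,\Delta t\,\|\sigma_{tt}\|_{L^{\infty}(L^2)},
\end{equation*}
which is first order, not second. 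So your argument (like the paper's) establishes \cref{main-est} only with $(\Delta t)^2$ replaced by $\Delta t$, unless \cref{tt-reg} is strengthened to $\sigma_{ttt},\boldsymbol{v}_{ttt}\in L^{1}([0,T],L^2)$ (or $L^{\infty}$), in which case your midpoint-symmetry mechanism does give $\Delta t\sum_j\|\cdot\|_0\lesssim(\Delta t)^2\|\sigma_{ttt}\|_{L^1(L^2)}$ and the claimed rate follows. As written, the assertion that midpoint symmetry ``upgrades the naive truncation to $(\Delta t)^2$'' is not justified by the hypotheses of the theorem.
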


\begin{remark}\label{rem4.1}
From \cref{rem3.1,theorem4.2} we easily see that the error estimate \cref{main-est} holds for 
\begin{itemize}
\item    Arnold-Winther's triangular elements ($d=2$) \cite{2002Mixed} and  Arnold-Awanou's rectangular elements ($d=2$) \cite{Arnold2005} with   $  l=m=k+1$ $ (k\geq 1)$ and $m'=k+2$;
\item  Hu-Zhang's triangular/cuboid elements ($d=2,3$) \cite{Hu2015FINITE,Hu2014FINITE} with $   l=m'=k+d-1  $ ($k\geq 2$) and $  m=k+d$;
 \item 
  Hu-Man-Zhang's rectangular/cuboid element ($d=2,3$) \cite{2014Hu} with $   l=m'=1  $ and $  m=2$;  
\item Nedelec's rectangular/cuboid elements ($d=2,3$) \cite{Eliane2001A,J1986A} with $   l= k+1  $ ($k\geq 0$) and $  m=m'=k+2$.
\end{itemize}



\end{remark}

\section{  Numerical results}

As shown in \cref{rem4.1}, there are many existing mixed conforming finite elements that can be used in the discretization of the two- or three-dimensional Maxwell viscoelastic model \cref{1.1}. 
In this section, we only consider  two-dimensional    numerical examples (Examples 5.1-5.3)  and apply  the following two low order rectangular elements
in the full discretization scheme  \cref{full}:
\begin{itemize}
\item The lowest order   modified Nedelec's rectangular element  \cite{Eliane2001A} ($k=0$) with mass lumping: $Q_{1}^{\mathrm{\textbf{div}}}-Q_0$ element.
The corresponding finite-dimensional spaces are 
\begin{align*}
	&\uuline{\mathrm{H}_h}=\left\{\tau\in\uuline{\mathrm{H}}(\mathrm{\bf{div}},\Omega,S); \   \tau_{ij}|_K\in Q_{1}(K) \ \forall K\in\mathcal{T}_h\right\},\\
	&\uline{\mathrm{V}_h}=\left\{\boldsymbol{w}\in\uline{L}^2(\Omega);\ w_i|_K\in Q_0(K) \ \forall K\in\mathcal{T}_h\right\},
\end{align*}
and the  local degrees of freedom for the stress tensor $\sigma_h\in \uuline{\mathrm{H}_h}$ are $\sigma_h(T_i)$ ($i=1,2,3,4$), i.e., the  values of $\sigma_h$ at the four vertices of  rectangular element $K$.
In the computation of $a(\cdot,\cdot)$, 
 the following quadrature formula on $K$ is  used for mass lumping \cite{Eliane2001A}:
\begin{align*}
	\int_{K}g\mathrm{d}x\approx\frac{h_xh_y}{4}\sum_{i=1}^4g(T_i) \qquad\forall g\in C^0(K),
\end{align*}
where $h_x$ and $h_y$ are the side lengths of $K$. 

\item  Hu-Man-Zhang's (abbr. HMZ) rectangular element \cite{2014Hu}.  We recall in this case that
\begin{align*}
	&\uuline{\mathrm{H}_h}=\left\{\tau\in\uuline{\mathrm{H}}(\mathrm{\bf{div}},\Omega,S); \tau_{ii}|_K\in span\{1,x_i,x_i^2\}, 
	\tau_{12}|_K\in Q_1(K)\  \forall K\in\mathcal{T}_h \right\}, \\
	&\uline{\mathrm{V}_h}=\left\{\boldsymbol{w}\in\uline{L}^2(\Omega);\  w_i|_K\in span\{1,x_i\}  \  \forall K\in\mathcal{T}_h\right\}.
\end{align*}
The local nodal degrees of freedom for the stress tensor   are shown in \cref{Hu-Man-Zhang}.

\end{itemize}
For the numerical quadrature on each element $K$, we divide  $K$ into two triangles and use the seven-points Gauss quadrature formula on each triangle.

\begin{figure}[h]
	\centering
	\includegraphics[width=11.5cm,height=4.5cm]{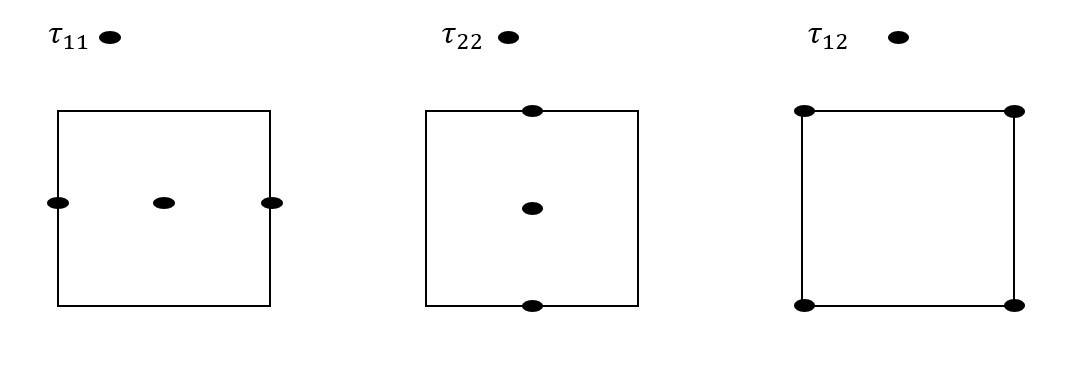}
	\caption{Nodal degrees of freedom for Hu-Man-Zhang's element}
	\label{Hu-Man-Zhang}
\end{figure}

 In the model  problem \cref{1.1}, we take $\Omega=[0,1]\times[0,1]$, $T=1$, and   assume that the elastic medium is isotropic with  $\rho=1, \mu=1, \lambda=1$.
 We use  $N\times N$ square meshes and $M$ uniform grids for   the spatial region $\Omega$ and the time region $[0,T]$.  To test the accuracy, we compute the following errors for the  stress and velocity approximations:
\begin{align*}
	&E^{a}_{\sigma}=\mathop{\mathrm{max}}_{1\leq n\leq M}||\sigma(t_{n})-\sigma_h(t_{n})||_a, \\
	&E^{c}_{\boldsymbol{v}}=\mathop{\mathrm{max}}_{1\leq n\leq M}||\boldsymbol{v}(t_{n})-\boldsymbol{v}_h(t_{n})||_c.
\end{align*}
According to \cref{theorem4.2,rem4.1}, the theoretical accuracy of the full discretization is 
\begin{align*}
	E^{a}_{\sigma}+E^{c}_{\boldsymbol{v}}\lesssim h+ (\Delta t)^2\approx N^{-1}+M^{-2}.
\end{align*}
 We consider the following  three examples. 

\begin{example}\label{example1}
	The exact   displacement field $\boldsymbol{u}(x,y,t)$ and symmetric stress tensor $\sigma(x,y,t)=(\sigma_{ij})_{2\times2}$ are respectively given by 
	\begin{equation*}
		\boldsymbol{u}=\left(
		\begin{array}{l}
			-e^{-t}(x^4-2x^3+x^2)(4y^3-6y^2+2y)\\
		-e^{-t}(y^4-2y^3+y^2)(4x^3-6x^2+2x)
		\end{array}
		\right),
	\end{equation*}
	 \begin{equation*}
\left(
\begin{array}{c}
	\sigma_{11} \\
	\sigma_{12} \\
	\sigma_{22} \\
\end{array}
\right)=\left(
	\begin{array}{l}
		16te^{-t}(2x^3-3x^2+x)(2y^3-3y^2+y) \\ 2te^{-t}[(x^4-2x^3+x^2)(6y^2-6y+1)+(y^4-2y^3+y^2)(6x^2-6x+1)] \\
        16te^{-t}(2x^3-3x^2+x)(2y^3-3y^2+y) \\
	\end{array}
	\right).
\end{equation*}
	Notice that the velocity field $\boldsymbol{v} =\boldsymbol{u}_t $. Numerical results  of $E^{a}_{\sigma}$ and $E^{a}_{\boldsymbol{v}}$ are shown in \cref{table1,table7}.

\end{example}

\begin{example}\label{example2}
	The exact   displacement field $\boldsymbol{u} $ and symmetric stress tensor $\sigma$ are respectively given by 	\begin{equation*}
		\boldsymbol{u}=\left(
		\begin{array}{l}
			-e^{-t}\sin(\pi x)\sin(\pi y)\\
			-e^{-t}\sin(\pi x)\sin(\pi y)
		\end{array}
		\right),
	\end{equation*}
	\begin{equation*}
	\left(
	\begin{array}{c}
		\sigma_{11} \\
		\sigma_{12} \\
		\sigma_{22} \\
	\end{array}
	\right)=\left(
	\begin{array}{l}
		\pi te^{-t}(3\cos(\pi x)\sin(\pi y)+\sin(\pi x)\cos(\pi y)) \\
		\pi te^{-t}(\sin(\pi x)\cos(\pi y)+\cos(\pi x)\sin(\pi y))\\
		\pi te^{-t}(3\sin(\pi x)\cos(\pi y)+\cos(\pi x)\sin(\pi y)) 
	\end{array}
	\right).
\end{equation*}
Numerical results are shown in \cref{table2,table8}.
	
\end{example} 

\begin{example}\label{example3}
The exact   displacement field $\boldsymbol{u} $ and symmetric stress tensor $\sigma$ are respectively given by 
	\begin{equation}
		\boldsymbol{u} =\left(
		\begin{array}{l}
			e^t\sin(\pi x)(y^{3/2}-y^{5/2})\\
			e^t\sin(\pi y)(x^{3/2}-x^{5/2})
		\end{array}
		\right),
	\end{equation}
	 \begin{equation}
	\left(
	\begin{array}{c}
		\sigma_{11} \\
		\sigma_{12} \\
		\sigma_{22} \\
	\end{array}
	\right)=\left(
	\begin{array}{l}
		\pi e^{t}(\frac{3}{2}\cos(\pi x)(y^{\frac{3}{2}}-y^{\frac{5}{2}})+\frac{1}{2}\cos(\pi y)(x^{\frac{3}{2}}-x^{\frac{5}{2}})) \\
		\frac{1}{2}e^t(\sin(\pi x)(\frac{3}{2}y^{\frac{1}{2}}-\frac{5}{2}y^{\frac{3}{2}})+\sin(\pi y)(\frac{3}{2}x^{\frac{1}{2}}-\frac{5}{2}x^{\frac{3}{2}})) \\
		\pi e^t(\frac{3}{2}\cos(\pi y)(x^{\frac{3}{2}}-x^{\frac{5}{2}})+\frac{1}{2}\cos(\pi x)(y^{\frac{3}{2}}-y^{\frac{5}{2}}))
	\end{array}
	\right).
\end{equation}
Numerical results are shown in \cref{table3,table9}.
\end{example}

\cref{table1,table2,table3} give some numerical results with a fixed  time step $\Delta t=0.005$ to verify the theoretical first order spatial-accuracy of the schemes.   \cref{table7,table8,table9} give   numerical results with synchronous refinement of  temporal and spatial meshes, $h=4(\Delta t)^2$ or equivalently $N=M^2/4$,   to verify the theoretical second order temporal-accuracy.  From all the numerical results we have the following observations:

\begin{itemize}

 \item As shown in \cref{table1,table2,table3},  the HMZ element  is of  first order spatial accuracy,   and 
 the Nedelec's $Q_{1}^{\mathrm{\textbf{div}}}-Q_0$ element  gives   better convergence rates than the first order for both  $E^{a}_{\sigma}$ and $E^{c}_{\boldsymbol{v}}$.   

 \item   As shown in  \cref{table7,table8,table9}, the HMZ element  is of  second order temporal-accuracy, and the
    $Q_{1}^{\mathrm{\textbf{div}}}-Q_0$ element  yields   higher than 2nd order  convergence rates. 
    
    \item For  the  $Q_{1}^{\mathrm{\textbf{div}}}-Q_0$ element,  the better convergence behaviours  than the theoretical prediction may be due to some superconvergence of the element on square meshes.
  
\end{itemize}

\begin{table}[H]
	\caption{Convergence history: \cref{example1} with $\Delta t=0.005$.}
	\label{table1}
\setlength{\tabcolsep}{4mm}
		\begin{tabular}{|c|c|c|c|c|c|}
		\hline  
		\multirow{2}{*}{}&\multirow{2}{*}{$N\times N$}&
		\multicolumn{2}{c|}{$E^{a}_{\sigma}$}&\multicolumn{2}{c|}{$E^{c}_{\boldsymbol{v}}$}\cr\cline{3-6}  
		& &error&order&error&order\cr  
		\hline
		\multirow{5}{*}{$Q_{1}^{\mathrm{\textbf{div}}}-Q_0$}&4$\times$4&0.0207&-&0.0066&- \\
		\cline{2-6}
		& 8$\times$8&0.0111&0.89&0.0033&0.98 \\
		\cline{2-6}
		& 16$\times$16&0.0053&1.08&0.0016&1.08\\
		\cline{2-6}
		&32$\times$32&0.0019&1.49&0.0007&1.11\\
		\cline{2-6}
		&64$\times$64&0.0004&1.97&0.0003&1.22\\
		\hline
        \multirow{5}{*}{HMZ}&4$\times$4&0.0097&-&0.0032&- \\
        \cline{2-6}
        & 8$\times$8&0.0054&0.83&0.0018&0.86 \\
        \cline{2-6}
        & 16$\times$16&0.0028&0.96&0.0008&0.97\\
        \cline{2-6}
        &32$\times$32&0.0014&0.99&0.0004&0.99\\
        \cline{2-6}
        &64$\times$64&0.0007&1.00&0.0002&1.00\\
        \hline
	\end{tabular}
\end{table}

\begin{table}[H]
	\caption{\label{table2}Convergence history: \cref{example2} with $\Delta t=0.005$.}
	\setlength{\tabcolsep}{4mm}
	\begin{tabular}{|c|c|c|c|c|c|}
		\hline  
		\multirow{2}{*}{}&\multirow{2}{*}{$N\times N$}&
		\multicolumn{2}{c|}{$E^{a}_{\sigma}$}&\multicolumn{2}{c|}{$E^{c}_{\boldsymbol{v}}$}\cr\cline{3-6}  
		& &error&order&error&order\cr  
		\hline
		\multirow{5}{*}{$Q_{1}^{\mathrm{\textbf{div}}}-Q_0$}&4$\times$4&0.9423&-&0.4120&- \\
		\cline{2-6}
		& 8$\times$8&0.5323&0.82&0.1925&1.10 \\
		\cline{2-6}
		& 16$\times$16&0.2245&1.25&0.0862&1.16\\
		\cline{2-6}
		&32$\times$32&0.0663&1.76&0.0355&1.28\\
		\cline{2-6}
		&64$\times$64& 0.0157&2.08&0.0156&1.18\\
		\hline
		\multirow{5}{*}{HMZ}&4$\times$4&0.3524&-&0.1587&- \\
		\cline{2-6}
		& 8$\times$8&0.1784&0.98&0.0797&0.99 \\
		\cline{2-6}
		& 16$\times$16&0.0896&0.99&0.0399&1.00\\
		\cline{2-6}
		&32$\times$32&0.0448&1.00&0.0199&1.00\\
		\cline{2-6}
		&64$\times$64&0.0224&1.00&0.0100&1.00\\
		\hline 		
	\end{tabular}
\end{table} 

\begin{table}[H]
	\caption{\label{table3}Convergence history: \cref{example3} with $\Delta t=0.005$.}
	\setlength{\tabcolsep}{4mm}
	\begin{tabular}{|c|c|c|c|c|c|}
		\hline  
		\multirow{2}{*}{}&\multirow{2}{*}{$N\times N$}&
		\multicolumn{2}{c|}{$E^{a}_{\sigma}$}&\multicolumn{2}{c|}{$E^{c}_{\boldsymbol{v}}$}\cr\cline{3-6}  
		& &error&order&error&order\cr  
		\hline
		\multirow{5}{*}{$Q_{1}^{\mathrm{\textbf{div}}}-Q_0$}&4$\times$4&0.6230&-&0.2136&- \\
		\cline{2-6}
		& 8$\times$8&0.3480&0.84&0.1000&1.10 \\
		\cline{2-6}
		& 16$\times$16&0.1522&1.19&0.0422&1.25\\
		\cline{2-6}
		&32$\times$32&0.0460&1.73&0.0179&1.24\\
		\cline{2-6}
		&64$\times$64&0.0111&2.05&0.0081&1.14\\
		\hline
		\multirow{5}{*}{HMZ}&4$\times$4&0.2531&-&0.0833&- \\
		\cline{2-6}
		& 8$\times$8&0.1307&0.95&0.0425&0.97 \\
		\cline{2-6}
		& 16$\times$16&0.0661&0.98&0.0214&0.99\\
		\cline{2-6}
		&32$\times$32&0.0332&0.99&0.0107&1.00\\
		\cline{2-6}
		&64$\times$64&0.0167&1.00&0.0053&1.00\\
		\hline 		
	\end{tabular}
\end{table}

\begin{table}[H]
	\caption{\label{table7}Convergence history: \cref{example1} with $N=M^2/4.$}
	\setlength{\tabcolsep}{4mm}
	\begin{tabular}{|c|c|c|c|c|c|}
		\hline  
		\multirow{2}{*}{}&\multirow{2}{*}{$M$}&
		\multicolumn{2}{c|}{$E^{a}_{\sigma}$}&\multicolumn{2}{c|}{$E^{c}_{\boldsymbol{v}}$}\cr\cline{3-6}  
		& &error&order&error&order\cr  
		\hline
		\multirow{4}{*}{$Q_{1}^{\mathrm{\textbf{div}}}-Q_0$}&4&0.0096&-&0.0054&- \\
		\cline{2-6}
		&8 &0.0014&2.76&0.0013&2.02 \\
		\cline{2-6}
		&12 &0.0004&2.91&0.0005&2.27\\
		\cline{2-6}
		&16 &0.0001&2.97&0.0002&2.22\\
		\hline
		\multirow{4}{*}{HMZ}&4&0.0097&-&0.0025&- \\
		\cline{2-6}
		&8 &0.0028&1.79&0.0007&1.66 \\
		\cline{2-6}
		&12 &0.0013&1.98&0.0003&1.88\\
		\cline{2-6}
		&16 &0.0007&2.00&0.0002&1.93\\
		\hline
	\end{tabular}
\end{table}

\begin{table}[H]
	\caption{\label{table8}Convergence history:  \cref{example2} with $N=M^2/4.$}
	\setlength{\tabcolsep}{4mm}
	\begin{tabular}{|c|c|c|c|c|c|}
		\hline  
		\multirow{2}{*}{}&\multirow{2}{*}{$M$}&
		\multicolumn{2}{c|}{$E^{a}_{\sigma}$}&\multicolumn{2}{c|}{$E^{c}_{\boldsymbol{v}}$}\cr\cline{3-6}  
		& &error&order&error&order\cr  
		\hline
		\multirow{4}{*}{$Q_{1}^{\mathrm{\textbf{div}}}-Q_0$}&4&0.3531&-&0.3231&- \\
		\cline{2-6}
		&8 &0.0404&3.13&0.0667&2.28 \\
		\cline{2-6}
		&12 &0.0118&3.03&0.0268&2.24\\
		\cline{2-6}
		&16 &0.0049&3.05&0.0145&2.14\\
		\hline
		\multirow{4}{*}{HMZ}&4&0.3536&-&0.1253&- \\
		\cline{2-6}
		&8 &0.0896&1.98&0.0354&1.82 \\
		\cline{2-6}
		&12 &0.0399&2.00&0.0164&1.90\\
		\cline{2-6}
		&16 &0.0224&2.00&0.0094&1.93\\
		\hline
	\end{tabular}
\end{table} 

\begin{table}[H]
	\caption{\label{table9}Convergence history: \cref{example3} with $N=M^2/4.$}
	\setlength{\tabcolsep}{4mm}
	\begin{tabular}{|c|c|c|c|c|c|}
		\hline  
		\multirow{2}{*}{}&\multirow{2}{*}{$M$}&
		\multicolumn{2}{c|}{$E^{a}_{\sigma}$}&\multicolumn{2}{c|}{$E^{c}_{\boldsymbol{v}}$}\cr\cline{3-6}  
		& &error&order&error&order\cr  
		\hline
		\multirow{4}{*}{$Q_{1}^{\mathrm{\textbf{div}}}-Q_0$}&4&0.2601&-&0.2126&- \\
		\cline{2-6}
		&8 &0.0362&2.85&0.0396&2.42 \\
		\cline{2-6}
		&12 &0.0117&2.78&0.0153&2.34\\
		\cline{2-6}
		&16 &0.0055&2.60&0.0081&2.21\\
		\hline
		\multirow{4}{*}{HMZ}&4&0.2528&-&0.0837&- \\
		\cline{2-6}
		&8 &0.0661&1.94&0.0215&1.96 \\
		\cline{2-6}
		&12 &0.0295&1.99&0.0096&2.00\\
		\cline{2-6}
		&16 &0.0166&1.99&0.0054&2.00\\
		\hline
	\end{tabular}
\end{table}

\bibliographystyle{plain}
\bibliography{viscoelastic}

\begin{thebibliography}{10}

\bibitem{Arnold2005}
D.~N. Arnold and G.~Awanou.
\newblock Rectangular mixed finite elements for elasticity.
\newblock { Mathematical Models and Methods in Applied Sciences},
  15(9):1417--1429, 2005.

\bibitem{2002Mixed}
D.~N. Arnold and R.~Winther.
\newblock Mixed finite elements for elasticity.
\newblock {Numerische Mathematik}, 92(3):401--419, 2002.

\bibitem{Eliane2005A}
E.~Becache, A.~Ezziani, and P.~Joly.
\newblock A mixed finite element approach for viscoelastic wave propagation.
\newblock {Computational Geoences}, 8(3):255--299, 2005.

\bibitem{Eliane2001A}
E.~Becache, P.~Joly, and C.~Tsogka.
\newblock A new family of mixed finite elements for the linear elastodynamic
  problem.
\newblock {Siam Journal on Numerical Analysis}, 39(6):2109--2132, 2001.

\bibitem{1960Bland}
D.~R. Bland.
\newblock {The theory of linear viscoelasticity}.
\newblock Pergamon Press, 1960.

\bibitem{2002Parallel}
T.~Bohlen.
\newblock Parallel 3-d viscoelastic finite difference seismic modelling.
\newblock {Computers \& Geosciences}, 28(8):887--899, 2002.

\bibitem{1991Mixed}
F.~Brezzi and M.~Fortin.
\newblock {Mixed and hybrid finite element methods}.
\newblock Springer-Verlag, 1991.

\bibitem{Christensen1982Theory}
R.~M. Christensen.
\newblock {Theory of Viscoelasticity, An Introduction}.
\newblock Academic Press, 1982.

\bibitem{2007Dill}
E.H. Dill.
\newblock {Continuum Mechanics : Elasticity, Plasticity, Viscoelasticity}.
\newblock CRC Press, 2007.

\bibitem{1998Drozdov}
A.~D. Drozdov.
\newblock {Mechanics of Viscoelastic Solids}.
\newblock Wiley, 1998.

\bibitem{Fung1966International}
Y.~C Fung.
\newblock International series on dynamics. (book reviews: Foundations of solid
  mechanics).
\newblock {Science}, 152, 1966.

\bibitem{1988Boundary}
J.~M. Golden and G.~A.~C. Graham.
\newblock {Boundary Value Problems in Linear Viscoelasticity}.
\newblock Springer, 1988.

\bibitem{1962Gurtin}
M.~E. Gurtin and E.~Sternberg.
\newblock On the linear theory of viscoelasticity.
\newblock {Archive for Rational Mechanics and Analysis}, 11(1):291--356,
  1962.

\bibitem{2002Nonconforming}
T.~Ha, J.E. Santos, and D.~Sheen.
\newblock Nonconforming finite element methods for the simulation of waves in
  viscoelastic solids.
\newblock {Computer Methods in Applied Mechanics \& Engineering},
  191:5647--5670, 2002.

\bibitem{Hu2015FINITE}
J.~Hu.
\newblock Finite element approximations of symmetric tensors on simplicial
  grids in $r^n$: the high order case.
\newblock {Journal of Computational Mathematics}, 33(3):283--296, 2015.

\bibitem{2014Hu}
J.~Hu, H.~Y. Man, and Zhang S.
\newblock A simple conforming mixed finite element for linear elasticity on
  rectangular grids in any space dimension.
\newblock {J Sci Comput}, 58:367--379, 2014.

\bibitem{Hu2014FINITE}
J.~Hu and S.~Y. Zhang.
\newblock A family of conforming mixed finite elements for linear elasticity on
  triangular grids.
\newblock {arXiv:1406.7457}, 2014.

\bibitem{1995Numerical}
V.~Janovsky, S.~Shaw, M.~K. Warby, and J.~R. Whiteman.
\newblock Numerical methods for treating problems of viscoelastic isotropic
  solid deformation.
\newblock {Journal of Computational \& Applied Mathematics},
  63(1-3):91--107, 1995.

\bibitem{2018The}
M.~Kimura, Notsu H., Y.~Tanaka, and H.~Yamamoto.
\newblock The gradient flow structure of an extended maxwell viscoelastic model
  and a structure-preserving finite element scheme.
\newblock {Journal of Scientific Computing}, 2018.

\bibitem{Carcione0Wave}
D.~Kosloff, J.M. Carcione, and R.~Kosloff.
\newblock Wave propagation simulation in a linear viscoelastic medium.
\newblock {Geophysical Journal}, 93:393--407, 1988.

\bibitem{Carcione01988}
D.~Kosloff, J.M. Carcione, and R.~Kosloff.
\newblock Wave propagation simulation in a visco-elastic medium.
\newblock {Geophysical Journal}, 95:597--611, 1988.

\bibitem{Lee2012Mixed}
J.~Lee.
\newblock {Mixed methods with weak symmetry for time dependent problems of
  elasticity and viscoelasticity.}
\newblock PhD thesis, University of Minnesota., 2012.

\bibitem{J1986A}
J.~C. Nedelec.
\newblock A new family of mixed finite elements in $\mathcal{R}^3$.
\newblock {Numerische Mathematik}, 50(1):57--81, 1986.

\bibitem{B2006Discontinuous}
B.~Riviere and S.~Shaw.
\newblock Discontinuous galerkin finite element approximation of nonlinear
  non‐fickian diffusion in viscoelastic polymers.
\newblock {Siam Journal on Numerical Analysis}, 44(6):2650--2670, 2006.

\bibitem{B2003Discontinuous}
B.~Riviere, S.~Shaw, M.~F. Wheeler, and J.~R. Whiteman.
\newblock Discontinuous galerkin finite element methods for linear elasticity
  and quasistatic linear viscoelasticity.
\newblock {Numerische Mathematik}, 95(2):347--376, 2003.

\bibitem{B2007Discontinuous}
B.~Riviere, S.~Shaw, and J.~R. Whiteman.
\newblock Discontinuous galerkin finite element methods for dynamic linear
  solid viscoelasticity problems.
\newblock {Numerical Methods for Partial Differential Equations},
  23(5):1149--1166, 2007.

\bibitem{2010MIXED}
M.E. Rognes and R.~Winther.
\newblock Mixed finite element methods for linear viscoelasticity using weak
  symmetry.
\newblock {Mathematical Models and Methods in Applied Sciences},
  20(06):955--985, 2010.

\bibitem{Sabinin2003Numerical}
V.~Sabinin, T.~Chichinina, and G.R. Jarillo.
\newblock {Numerical Model of Seismic Wave Propagation in Viscoelastic
  Media}.
\newblock Springer Berlin Heidelberg, 2003.

\bibitem{Salencon2016Visco}
J.~Salencon.
\newblock {Viscoélasticité pour le calcul des structures}.
\newblock 2016.

\bibitem{2000Nonlinear}
R.~A Schapery.
\newblock Nonlinear viscoelastic solids.
\newblock {International Journal of Solids and Structures},
  37(1–2):359--366, 2000.

\bibitem{Marques2012Computational}
P.~C.~M. Severino and J.~C. Guillermo.
\newblock {Computational Viscoelasticity}.
\newblock Springer New York, 2012.

\bibitem{Vidar1986Galerkin}
V.~Thomee.
\newblock Galerkin finite element methods for parabolic problems.
\newblock {Mathematics of Computation}, 17(2):186--187, 2006.

\bibitem{2020Wang}
S.~Wang and X.~Xie.
\newblock Semi-discrete and fully discrete hybrid stress finite element methods
  for maxwell viscoelastic model of wave propagation.
\newblock {Numerical Mathematics A Journal of Chinese Universities}, 42(3),
  2020.

\bibitem{Tong1998Efficient}
T.~Xu and G.A. McMechan.
\newblock Efficient 3-d viscoelastic modeling with application to near-surface
  land seismic data.
\newblock {Geophysics}, 63(2):601--612, 1998.

\end{thebibliography}

\end{document}